\let\chooseClass1   
\let\chooseClass2   
\let\chooseClass3   
\newcommand{\CopyName}{V.\ V.\ Lyubashenko} 
\newcommand{\NAME}{V.\ V.\ LYUBASHENKO} %
\newcommand{\Volume}{?} 
\newcommand{\Number}{?} 
\newcommand{\rightheadtext}{Curved bar and cobar constructions} 
\renewcommand{\refname}{\refnam}
\newcommand{\tit}{Bar and cobar constructions for curved algebras and coalgebras} 
\date{}
\def\@seccntformat#1{\csname the#1\endcsname.\quad}
\renewcommand\section{\@startsection {section}{1}{\z@}%
                                   {-3.5ex \@plus -1ex \@minus -.2ex}%
                                   {2.3ex \@plus.2ex}%
                                   {\normalfont\large\bfseries}}
\renewcommand\subsection{\@startsection{subsection}{2}{\z@}%
                        {3.25ex plus 1ex minus .2ex}{-.5em}%
                        {\normalfont\normalsize\bfseries}}
\renewcommand\subsubsection{\@startsection{subsubsection}{3}{\z@}%
                        {3.25ex plus 1ex minus .2ex}{-.5em}%
                        {\normalfont\normalsize\it}}
\newtheoremstyle{boldhead}
{\topsep}
{\topsep}
{\slshape}
{}
{\bfseries}
{.}
{ }
{\thmname{#1}\thmnumber{ #2}\thmnote{ (#3)}}
\newtheoremstyle{boldremark}
{\topsep}
{\topsep}
{\upshape}
{}
{\bfseries}
{.}
{ }
{\thmname{#1}\thmnumber{ #2}\thmnote{ (#3)}}
\theoremstyle{boldhead}
\newtheorem{theorem}[subsection]{Theorem}
\newtheorem{corollary}[subsection]{Corollary}
\newtheorem{proposition}[subsection]{Proposition}
\theoremstyle{boldremark}
\newtheorem{definition}[subsection]{Definition}
\newtheorem{example}[subsection]{Example}
\numberwithin{equation}{section}
\newcommand\NN{{\mathbb N}}
\newcommand\RR{{\mathbb R}}
\newcommand{\sfj}{{\mathsf j}}
\newcommand{\sfr}{{\mathsf r}}
\newcommand{\sfv}{{\mathsf v}}
\newcommand{\sfw}{{\mathsf w}}
\newcommand{\beps}{{\boldsymbol\eps}}
\newcommand{\bfeta}{{\boldsymbol\eta}}
\newcommand{\bull}{{\scriptscriptstyle\bullet}}
\newcommand{\bv}{{\mathbf v}}
\newcommand{\bw}{{\mathbf w}}
\newcommand{\tdt}{\otimes\dots\otimes}
\newcommand{\n}[1]{\nobreakdash-\hspace{0pt}}
\newcommand{\ainf}[1]{$A_\infty$\nobreakdash-\hspace{0pt}}
\newcommand{\ainfm}[1]{$\mathrm{A}_\infty$\nobreakdash-\hspace{0pt}}
\let\eps\varepsilon
\let\ge\geqslant
\let\kk\Bbbk
\let\rto\xrightarrow
\let\tens\otimes
\let\und\underline
\newcommand\sff{{\mathsf f}}
\newcommand\sfg{{\mathsf g}}
\newcommand\sfh{{\mathsf h}}
\newcommand\sfq{{\mathsf q}}
\newcommand{\alg}{\textup{-alg}}
\DeclareMathOperator{\Bbar}{Bar}
\DeclareMathOperator{\sfBar}{{\sf Bar}}
\DeclareMathOperator{\Cobar}{Cobar}
\DeclareMathOperator{\sfCobar}{{\sf Cobar}}
\DeclareMathOperator\dg{\mathbf{dg}}
\DeclareMathOperator\End{End}
\DeclareMathOperator\ev{ev}
\DeclareMathOperator\gr{\mathbf{gr}}
\DeclareMathOperator\id{id}
\DeclareMathOperator\inj{in}
\DeclareMathOperator\Ker{Ker}
\DeclareMathOperator{\nuCoalg}{nuCoalg}
\DeclareMathOperator\Ob{Ob}
\DeclareMathOperator\opr{\overline{\textup{pr}}}
\DeclareMathOperator\pr{pr}
\DeclareMathOperator{\Tw}{Tw}
\DeclareMathOperator{\UCCAlg}{UCCAlg}
\DeclareMathOperator{\uccAlg}{uccAlg}
\DeclareMathOperator{\ucdgAlg}{uc\mathbf{dg}Alg}
\DeclareMathOperator{\acCoalg}{acCoalg}
\DeclareMathOperator{\CACoalg}{CACoalg}
\DeclareMathOperator{\caCoalg}{caCoalg}
\newcommand{\defref}[1]{Definition~\ref{#1}}
\newcommand{\thmref}[1]{Theorem~\ref{#1}}
\begin{document}
\ifx\chooseClass1
\hbox to \textwidth{\footnotesize\textsc  Математичні Студії. Т.\Volume, \No \Number
\hfill
Matematychni Studii. V.\Volume, No.\Number}
\vspace{0.3in}
\textup{\scriptsize{УДК 512.582}} \vs 
\markboth{{\NAME}}{{\rightheadtext}}
\begin{center} \textsc {\CopyName} \end{center}
\begin{center} \renewcommand{\baselinestretch}{1.3}\bf {\tit} \end{center}

\vspace{20pt plus 0.5pt} {\abstract{ \noindent V.\ V.\ Lyubashenko.\ 
\textit{Bar and cobar constructions for curved algebras and coalgebras}\matref \vspace{3pt} \English 

We provide bar and cobar constructions as functors between some categories of curved algebras and curved augmented coalgebras over a graded commutative ring.
These functors are adjoint to each other.

\bigskip\Russian
\noindent В.\ В.\ Любашенко.\ 
\textit{Бар и кобар конструкции для кривых алгебр и коалгебр}\matrefrus \vspace{3pt} 

Мы рассматриваем бар и кобар конструкции как функторы между некоторыми категориями кривых алгебр и кривых увеличенных коалгебр над градуированным коммутативным кольцом.
Эти функторы сопряжены друг с другом.

}} \vsk
\subjclass{16E45} 

\keywords{curved algebra, curved augmented coalgebra, bar construction, cobar construction} 

\renewcommand{\refname}{\refnam}
\renewcommand{\proofname}{\ifthenelse{\value{lang}=0}{Proof}{\ifthenelse{\value{lang}=2}{Доказательство}{Доведення}}}

\vskip10pt
	\else
\bibliographystyle{amsalpha}
\title{Bar and cobar constructions for curved algebras and coalgebras}
\author{Volodymyr Lyubashenko}
\maketitle

\begin{abstract}
We provide bar and cobar constructions as functors between some categories of curved algebras and curved augmented coalgebras over a graded commutative ring.
These functors are adjoint to each other.
\end{abstract}
\fi

\allowdisplaybreaks[1]

In this article we recall some notions and reproduce some results from Positselski \cite{0905.2621,1202.2697} in modified form.
Our exposition differs in two aspects: firstly, we work over a graded commutative ring $\kk$ instead of a field or a topological local ring, secondly, we modify the definitions of categories of curved algebras and curved coalgebras.

The advantage of using graded commutative rings over usual commutative rings is that it allows to place (co)derivations of certain degree on equal footing with (co)algebra homomorphisms.
Take note of the last condition in the following

\begin{definition}\label{def-graded-commutative-ring}
A \emph{graded strongly commutative ring} is a graded ring $\kk$ such that \(ba=(-1)^{|a|\cdot|b|}ab\) for all homogeneous elements $a$, $b$ and $c^2=0$ for all elements $c$ of odd degree.
\end{definition}

The first condition implies only that $2c^2=0$ for elements $c$ of odd degree.

We give explicit formulae and detailed proofs.
Motivations come from \ainf-algebras and \ainf-coalgebras.

For any graded $\kk$-module $M$ and an integer $a$ denote by $M[a]$ the same module with the grading shifted by $a$: \(M[a]^k=M^{a+k}\).
Denote by \(\sigma^a:M\to M[a]\), \(M^k\ni x\mapsto x\in M[a]^{k-a}\) the ``identity map'' of degree \(\deg\sigma^a=-a\).
Write elements of $M[a]$ as \(m\sigma^a\).
Typically, a map is written on the right of its argument.
The composition of $X\rto f Y\rto g Z$ is denoted by $f\cdot g:X\to Z$ or simply by $fg$.
When \(f:V\to X\) is a homogeneous map of certain degree, the map \(f[a]:V[a]\to X[a]\) is defined as \(f[a]=(-1)^{a\deg f}\sigma^{-a}f\sigma^a=(-1)^{af}\sigma^{-a}f\sigma^a\).
The tensor product of homogeneous maps $f$, $g$ between graded $\kk$\n-modules is defined on elements $x$, $y$ of certain degree as
\[ (x\tens y).(f\tens g) = (-1)^{\deg y\cdot\deg f}x.f\tens y.g.
\]
Thus, the Koszul sign rule holds and we work in the closed symmetric monoidal category $\gr$ of graded $\kk$\n-modules with the symmetry \(x\tens y\mapsto(-1)^{\deg x\cdot\deg y}y\tens x\).

\section{Curved (co)algebras}
We define curved algebras and curved coalgebras as well as their morphisms suitable for our purposes.

\subsection{Curved algebras.}
We begin with curved algebras of various kinds.

\begin{definition}\label{def-strict-unit-complemented-curved-A8-algebra}
A \emph{strict-unit-complemented curved \ainf-algebra} \((A,(b_n)_{n\ge0},\bfeta,\bv)\) consists of a graded $\kk$\n-module $A$, degree 1 maps \(b_n:A[1]^{\tens n}\to A[1]\) (operations) for $n\ge0$, a degree $-1$ map \(\bfeta:\kk\to A[1]\) (strict unit) and a degree 1 map \(\bv:A[1]\to\kk\) (splitting of the unit) such that
\begin{gather}
\sum_{r+k+t=n} (1^{\tens r}\tens b_k\tens1^{\tens t})b_{r+1+t} =0: A[1]^{\tens n} \to A[1], \qquad \forall\, n\ge0,
\label{eq-b-b-0}
\\
(1\tens\bfeta)b_2 =1_{A[1]}, \quad (\bfeta\tens1)b_2 =-1_{A[1]}, \quad (1^{\tens a}\tens\bfeta\tens1^{\tens c})b_{a+1+c} =0 \text{ \ if \ } a+c\ne1, \notag
\\
\bfeta \cdot \bv =1_\kk. \notag
\end{gather}
\end{definition}

For any graded $\kk$\n-module $X$ the tensor $\kk$\n-module \(XT^\ge=\oplus_{n\ge0}X^{\tens n}\) is equipped with the cut coproduct
\[ (x_1\cdots x_n)\Delta =\sum_{k=0}^n x_1\cdots x_k \tens x_{k+1}\cdots x_n.
\]
The collection \(\check b=(b_n)_{n\ge0}:A[1]T^\ge\to A[1]\) amounts to a degree 1 coderivation \(b:A[1]T^\ge\to A[1]T^\ge\) of the counital coassociative coalgebra \(A[1]T^\ge\),
\[ b| =\sum_{r+k+t=n} 1^{\tens r}\tens b_k\tens1^{\tens t}:
A[1]^{\tens n}\to A[1]T^\ge.
\]
Equation~\eqref{eq-b-b-0} is equivalent to $b^2=0$.

Getting rid of the shift $[1]$ we rewrite the above operations as in \cite[(0.7)]{Lyu-A8-several-entries}
\begin{gather*}
m_n =(-1)^n\sigma^{\tens n}\cdot b_n\cdot\sigma^{-1}: A^{\tens n} \to A, \qquad \deg m_n=2-n, \qquad n\ge0,
\\
\eta =\bigl( \kk \rTTo^{\bfeta} A[1] \rTTo^{\sigma^{-1}} A \bigr), \qquad \deg \eta =0,
\\
\sfv =\bigl( A \rTTo^\sigma A[1] \rTTo^\bv \kk \bigr), \qquad \deg \sfv =0.
\end{gather*}
In these terms \defref{def-strict-unit-complemented-curved-A8-algebra} becomes

\begin{definition}
A \emph{strict-unit-complemented curved \ainfm-algebra} \((A,(m_n)_{n\ge0},\eta,\sfv)\) consists of a graded $\kk$\n-module $A$, maps \(m_n:A^{\tens n}\to A\) of degree $2-n$ (operations) for $n\ge0$, a degree 0 map \(\eta:\kk\to A\) (strict unit) and a degree 0 map \(\sfv:A\to\kk\) (splitting of the unit) such that
\begin{gather}
\sum_{j+p+q=n} (-1)^{jp+q}(1^{\tens j}\tens m_p\tens1^{\tens q})\cdot m_{j+1+q} =0: A^{\tens n} \to A, \qquad \forall\, n\ge0,
\label{eq-1m1-m-0}
\\
(1\tens\eta)m_2 =1_A, \quad (\eta\tens1)m_2 =1_A, \quad
(1^{\tens a}\tens\eta\tens1^{\tens c})m_{a+1+c} =0 \text{ \ if \ } a+c\ne1, \notag
\\
\eta \cdot \sfv =1_\kk. \notag
\end{gather}
\end{definition}

Restricting the above notion we get

\begin{definition}
A \emph{unit-complemented curved algebra} \((A,m_2,m_1,m_0,\eta,\sfv)\) is a strict-unit-complemented curved \ainfm-algebra $A$ with the strict unit $\eta$ and with $m_n=0$ for $n>2$.
\end{definition}

For such algebra $A$ equations~\eqref{eq-1m1-m-0} reduce to the system
\begin{gather*}
(1\tens m_2)m_2 =(m_2\tens1)m_2, \qquad m_2m_1 =(1\tens m_1 +m_1\tens1)m_2,
\\
m_1^2 =(m_0\tens1 -1\tens m_0)m_2, \qquad m_0m_1 =0,
\\
(1\tens\eta)m_2 =1, \qquad (\eta\tens1)m_2 =1, \qquad \eta m_1 =0, \qquad \eta \sfv =1_\kk,
\end{gather*}
which tells that $A$ is a unital associative graded algebra \((A,m_2,\eta)\) with a degree~1 derivation $m_1$, whose square is an inner derivation -- commutator with an element $m_0$ (curvature) of degree~2 and $m_0m_1=0$.
A direct complement \(\bar{A}=\Ker\sfv\) to the $\kk$\n-submodule \(\eta:\kk\hookrightarrow A\) is chosen.

The following example of a unit-complemented curved algebra was considered by Positselski in \cite[Section~0.6]{0905.2621}, see also Polishchuk \cite{Polishchuk:curved-dg}.

\begin{example}
Let $M$ be a smooth manifold, let $E\to M$ be a smooth vector bundle, $\kk=\RR$, denote
\[ \Omega^k(E) =\Gamma(E\tens\wedge^kT^*M), \qquad k\in\NN.
\]
Let \(\nabla:\Omega^0(E)\to\Omega^1(E)\) be a connection on $E$ which is viewed as a covariant exterior derivative \(\nabla:\Omega^k(E)\to\Omega^{k+1}(E)\) such that
\[ \forall\,\tau\in\Omega^\bull(E) \quad \forall\,\omega\in\Omega^\bull(M) \qquad (\tau\omega)\nabla =(-1)^\omega(\tau\nabla)\cdot\omega +\tau\cdot(\omega d).
\]

The category of vector bundles on $M$ is Cartesian closed.
The evaluation map \(\ev:E\times\End E\to E\) leads to the action \(\Omega^k(E)\tens\Omega^n(\End E)\to\Omega^{k+n}(E)\).
Moreover, elements \(h\in A^n=\Omega^n(\End E)\) can be identified with $\Omega^\bull(M)$-linear maps \(h:\Omega^k(E)\to\Omega^{k+n}(E)\), thus, \((\tau\omega)h=(-1)^{n|\omega|}(\tau h)\omega\).
For instance, the curvature 2-form \(-m_0=\nabla^2\) is a $\Omega^\bull(M)$-linear map, hence an element of \(\Omega^2(\End E)\).

The graded algebra \(A^\bull=\Omega^\bull(\End E)\) equipped with the derivation \((h)d_A=h\cdot\nabla-(-1)^h\nabla\cdot h\) (which is a covariant exterior derivative on the vector bundle $\End E$) and with the curvature element \(m_0\in A^2\) is a curved algebra since
\[ (h)d_A^2 =m_0h -hm_0, \qquad (m_0)d_A =0.
\]
The latter equation is the Bianchi identity.
\end{example}

A morphism between curved \ainf-algebras $A$ and $B$ should be given by a family of components \(f_n:A[1]^{\tens n}\to B[1]\), $n\ge0$.
The obtained matrix entries
\[ f_n^k =\sum_{i_1+\dots+i_k=n} f_{i_1}\tens f_{i_2}\tdt f_{i_k}: A[1]^{\tens n}\to B[1]^{\tens k}
\]
define a map \(f:A[1]T^\ge\to B[1]\hat T^\ge\), which in general does not factor through \(B[1]T^\ge\).
The equation $fb=bf$, which we write as
\[ \sum_{i_1+\dots+i_k=n} (f_{i_1}\tens f_{i_2}\tdt f_{i_k})b^B_k =\sum_{r+k+t=n} (1^{\tens r}\tens b^A_k\tens1^{\tens t})f_{r+1+t},
\]
also makes sense under some additional assumptions (like extra filtration \cite{FukayaOhOhtaOno:Anomaly} or topological structure of $\kk$ \cite{1202.2697}).
We shall consider only curved algebras $B$, which insures that the sum in the left hand side is finite.
Moreover, we assume that components $f_n$ vanish for $n>1$ and $f_0$ is of the form
\begin{equation}
f_0 =\bigl( \kk \rTTo^{\und f} \kk \rTTo^{\bfeta} B[1] \bigr),
\label{eq-f0-undf-1}
\end{equation}
where \(\deg\und f=1\).
The latter assumption was made in order to deal with augmented coalgebras in bar and cobar constructions.
Which does not exclude that similar results could be obtained under weaker assumptions.

\begin{definition}
A \emph{morphism of unit-complemented curved algebras} \(f:A\to B\) is a pair \((f_1,\und f)\) consisting of $\kk$\n-linear maps \(f_1:A[1]\to B[1]\) of degree~0 and \(\und f:\kk\to\kk\) of degree~1 such that
\begin{equation}
(f_1\tens f_1)b^B_2 =b^A_2f_1, \qquad f_1b^B_1 =b^A_1f_1, \qquad
b^B_0 =b^A_0f_1, \qquad \bfeta_Af_1 =\bfeta_B.
\label{eq-fffb-bf}
\end{equation}
The composition $h:A\to C$ of morphisms \(f:A\to B\) and \(g:B\to C\) is \(h_1=f_1g_1\), \(\und h=\und g+\und f\).
\end{definition}

Under assumption \eqref{eq-f0-undf-1} the expected conditions
\begin{gather*}
f_1b^B_1 +(f_0\tens f_1)b^B_2 +(f_1\tens f_0)b^B_2 =b^A_1f_1,
\\
b^B_0 +f_0b^B_1 +(f_0\tens f_0)b^B_2 =b^A_0f_1, \qquad h_0=g_0+f_0g_1
\end{gather*}
reduce to the given ones.
In fact, \(\und f\in\kk^1\) implies \(\und f^2=0\) due to graded commutativity of $\kk$, see \defref{def-graded-commutative-ring}.

The last equation of \eqref{eq-fffb-bf} tells that $f_1$ preserves the unit.
These equations can be rewritten for conventional $\kk$\n-linear maps
\begin{alignat*}2
\sff_1 &=\bigl( A \rTTo^\sigma A[1] \rTTo^{f_1} B[1] \rTTo^{\sigma^{-1}} B \bigr), &\qquad \deg \sff_1 &=0,
\\
\sff_0 &=\bigl( \kk \rTTo^{f_0} B[1] \rTTo^{\sigma^{-1}} B \bigr) =\bigl( \kk \rTTo^{\und f} \kk \rTTo^\eta B \bigr), &\qquad \deg \sff_0 &=1,
\end{alignat*}
as follows.

\begin{definition}
A \emph{morphism of unit-complemented curved algebras} \(f:A\to B\) is a pair \((\sff_1,\und f)\) consisting of $\kk$\n-linear maps \(\sff_1:A\to B\) of degree~0 and \(\und f:\kk\to\kk\) of degree~1 such that
\begin{equation*}
(\sff_1\tens \sff_1)m^B_2 =m^A_2\sff_1, \qquad \sff_1m^B_1 =m^A_1\sff_1, \qquad m^B_0 =m^A_0\sff_1, \qquad \eta^A\sff_1 =\eta^B.
\end{equation*}
The composition $\sfh:A\to C$ of morphisms \(\sff:A\to B\) and \(\sfg:B\to C\) is \(\sfh_1=\sff_1\sfg_1\), \(\und h=\und g+\und f\).
The unit morphism is \((\id,0)\).
The category of unit-complemented curved algebras is denoted $\UCCAlg$.
\end{definition}

In particular, \(\sff_1:A\to B\) is a morphism of unital associative graded algebras.

\subsection{Curved coalgebras.}
Now we define curved coalgebras of various kinds.

\begin{definition}\label{def-strict-counit-complemented-curved-A8-coalgebra}
A \emph{strict-counit-complemented curved \ainf-coalgebra} \((C,(\xi_n)_{n\ge0},\beps,\bw)\) consists of a graded $\kk$\n-module $C$, degree 1 maps \(\xi_n:C[-1]\to C[-1]^{\tens n}\) (cooperations) for $n\ge0$, a degree $-1$ map \(\beps:C[-1]\to\kk\) (strict counit) and a degree 1 map \(\bw:\kk\to C[-1]\) (splitting of the counit) such that
\begin{gather}
\sum_{r+k+t=n} \xi_{r+1+t}(1^{\tens r}\tens\xi_k\tens1^{\tens t}) =0: C[-1]\to C[-1]^{\tens n}, \qquad \forall\, n\ge0,
\label{eq-xi-1xi1-0}
\\
\xi_2(1\tens\beps) =-1_{C[-1]}, \quad \xi_2(\beps\tens1) =1_{C[-1]}, \quad \xi_{a+1+c}(1^{\tens a}\tens\beps\tens1^{\tens c}) =0 \text{ \ if \ } a+c\ne1, \notag
\\
\bw \cdot \beps =1_\kk, \qquad \bw\xi_2 =-\bw\tens\bw. \notag
\end{gather}
\end{definition}

For any graded $\kk$\n-module $X$ its tensor algebra \(XT^\ge=\oplus_{n\ge0}X^{\tens n}\) is naturally embedded into its completed tensor algebra \(X\hat T^\ge=\prod_{n\ge0}X^{\tens n}\), \(\iota:XT^\ge\hookrightarrow X\hat T^\ge\).
An arbitrary $\iota$\n-derivation \(\xi:XT^\ge\to X\hat T^\ge\) is determined by its restriction to generators \(\check\xi:X\to X\hat T^\ge\).
In particular, the collection \((\xi_n)_{n\ge0}\) amounts to a degree 1 $\iota$\n-derivation \(\xi:C[-1]T^\ge\to C[-1]\hat T^\ge\) and equations~\eqref{eq-xi-1xi1-0} can be interpreted as $\xi^2=0$.

Getting rid of the shift $[-1]$ we rewrite the above via maps
\begin{gather*}
\delta_n =(-1)^n\sigma^{-1}\cdot\xi_n\cdot\sigma^{\tens n}: C\to C^{\tens n}, \qquad \deg\delta_n=2-n, \qquad n\ge0,
\\
\eps =\bigl( C \rTTo^{\sigma^{-1}} C[-1] \rTTo^\beps \kk \bigr), \qquad \deg \eps =0,
\\
\sfw =\bigl( \kk \rTTo^\bw C[-1] \rTTo^\sigma C \bigr), \qquad \deg \sfw =0.
\end{gather*}
In these terms \defref{def-strict-counit-complemented-curved-A8-coalgebra} becomes

\begin{definition}
A \emph{strict-counit-complemented curved \ainfm-coalgebra} \((C,(\delta_n)_{n\ge0},\eps,\sfw)\) consists of a graded $\kk$\n-module $C$, maps \(\delta_n:C\to C^{\tens n}\) of degree $2-n$ (cooperations) for $n\ge0$, a degree 0 map \(\eps:C\to\kk\) (strict counit) and a degree 0 map \(\sfw:\kk\to C\) (splitting of the counit) such that
\begin{gather}
\sum_{r+k+t=n} (-1)^{r+kt}\delta_{r+1+t}(1^{\tens r}\tens\delta_k\tens1^{\tens t}) =0: C\to C^{\tens n}, \qquad \forall\, n\ge0,
\label{eq-delta-1delta1-0}
\\
\delta_2(1\tens\eps) =1_C, \quad \delta_2(\eps\tens1) =1_C, \quad \delta_{a+1+c}(1^{\tens a}\tens\eps\tens1^{\tens c}) =0 \text{ \ if \ } a+c\ne1, \notag
\\
\sfw \cdot \eps =1_\kk, \qquad \sfw\delta_2 =\sfw\tens\sfw. \notag
\end{gather}
\end{definition}

Restricting the above notion and adding a conilpotency condition we get

\begin{definition}
A \emph{curved augmented coalgebra} \((C,\delta_2,\delta_1,\delta_0,\eps,\sfw)\) is a strict-\hspace{0pt}counit-\hspace{0pt}complemented curved \ainfm-coalgebra $C$ with $\delta_n=0$ for $n>2$ such that \((\bar{C}=\Ker\eps,\bar{\delta}_2)\) is conilpotent.
\end{definition}

For such coalgebra $C$ equations~\eqref{eq-delta-1delta1-0} reduce to the system
\begin{gather*}
\delta_2(1\tens\delta_2) =\delta_2(\delta_2\tens1), \qquad \delta_1\delta_2 =\delta_2(1\tens\delta_1 +\delta_1\tens1),
\\
\delta_1^2 =\delta_2(1\tens\delta_0 -\delta_0\tens1), \qquad \delta_1\delta_0 =0,
\\
\delta_2(1\tens\eps) =1_C, \qquad \delta_2(\eps\tens1) =1_C, \qquad \delta_1\eps =0, \qquad \sfw \cdot \eps =1_\kk, \qquad \sfw\delta_2 =\sfw\tens\sfw,
\end{gather*}
which tells that $C$ is a counital coassociative graded coalgebra \((C,\delta_2,\eps)\) with a degree~1 coderivation $\delta_1$, whose square is an inner coderivation determined by a functional  \(\delta_0:C\to\kk\) (curvature) of degree~2 and $\delta_1\delta_0=0$.
The degree~0 map \(\sfw:\kk\to C\) is a homomorphism of graded coalgebras, the augmentation of $C$.
In particular, \(\kk\sfw\hookrightarrow C\) is a direct complement to \(\bar{C}=\Ker\eps\).
The non-counital graded coalgebra $\bar{C}$ equipped with the comultiplication
\[ \bar{\delta}_2 =\delta_2 -1\tens\sfw -\sfw\tens1: \bar{C} \to \bar{C}\tens\bar{C}
\]
is conilpotent by assumption, that is,
\[ \bigcup_{n>1} \Ker(\bar\Delta^{(n)}: \bar{C} \to \bar{C}^{\tens n}) =\bar{C}.
\]

A morphism of curved \ainf-coalgebras $g:C\to D$ should be a $\dg$\n-algebra morphism \(g:C[-1]T^\ge\to D[-1]\hat T^\ge\), or, equivalently, a family of $\kk$\n-linear degree 0 maps \(g_n:C[-1]\to D[-1]^{\tens n}\), $n\ge0$, satisfying the equation \(g\xi=\xi g\).
However, to give sense to this equation in the form
\[ \sum_{r+k+t=n} g_{r+1+t}(1^{\tens r}\tens\xi_k\tens1^{\tens t}) =\sum_{i_1+\dots+i_k=n} \xi_k(g_{i_1}\tens g_{i_2}\tdt g_{i_k}): C[-1]\to D[-1]^{\tens n},
\]
one has to make additional assumptions.
We shall assume that $C$ is a curved coalgebra and $g_n$ vanish for $n>1$.
Moreover, we assume that $g_1$ preserves the splitting $\bw$:

\begin{definition}
A \emph{morphism of curved augmented coalgebras} \(g:C\to D\) is a pair \((g_1,g_0)\) consisting of $\kk$\n-linear maps \(g_1:C[-1]\to D[-1]\) and \(g_0:C[-1]\to\kk\) of degree~0 such that
\begin{gather*}
\xi^C_2(g_1\tens g_1) =g_1\xi^D_2, \qquad \xi^C_1g_1 +\xi^C_2(g_0\tens g_1 +g_1\tens g_0) =g_1\xi^D_1,
\\
\xi^C_0 +\xi^C_1g_0 +\xi^C_2(g_0\tens g_0) =g_1\xi^D_0, \qquad g_1\beps^D =\beps^C, \qquad \bw^Cg_1 =\bw^D.
\end{gather*}
The composition $h:C\to E$ of morphisms \(f:C\to D\) and \(g:D\to E\) is given by \(h_1=f_1g_1\), \(h_0=f_0+f_1g_0\).
\end{definition}

Rewriting this definition in terms of maps
\begin{alignat*}2
\sfg_1 &=\bigl( C \rTTo^{\sigma^{-1}} C[-1] \rTTo^{g_1} D[-1] \rTTo^\sigma D \bigr), &\qquad \deg \sfg_1 &=0,
\\
\sfg_0 &=\bigl( C \rTTo^{\sigma^{-1}} C[-1] \rTTo^{g_0} \kk \bigr), &\qquad \deg \sfg_0 &=1,
\end{alignat*}
we get

\begin{definition}\label{def-morphism-curved-augmented-coalgebras}
A \emph{morphism of curved augmented coalgebras} \(\sfg:C\to D\) is a pair \((\sfg_1,\sfg_0)\) consisting of $\kk$\n-linear maps \(\sfg_1:C\to D\) of degree~0 and \(\sfg_0:C\to\kk\) of degree~1 such that
\begin{gather*}
\delta^C_2(\sfg_1\tens\sfg_1) =\sfg_1\delta^D_2, \qquad \delta^C_1\sfg_1 +\delta^C_2(\sfg_0\tens\sfg_1 -\sfg_1\tens\sfg_0) =\sfg_1\delta^D_1,
\\
\delta^C_0 -\delta^C_1\sfg_0 -\delta^C_2(\sfg_0\tens\sfg_0) =\sfg_1\delta^D_0, \qquad \sfg_1\eps^D =\eps^C, \qquad \sfw^C\sfg_1 =\sfw^D.
\end{gather*}
The composition $\sfh:C\to E$ of morphisms \(\sff:C\to D\) and \(\sfg:D\to E\) is given by \(\sfh_1=\sff_1\sfg_1\), \(\sfh_0=\sff_0+\sff_1\sfg_0\).
The unit morphism is \((\id,0)\).
The category of curved augmented coalgebras is denoted $\CACoalg$.
\end{definition}

In particular, \(\sfg_1\) is a morphism of augmented graded coalgebras.
Actually, $\sfg_0$ occurs in the equations only as its restriction \(\sfg_0'=\sfg_0|_{\bar{C}}\) and validity of the equations does not depend on \(\und g=\sfw\sfg_0\in\kk^1\).
In fact, with the notation
\[ \bar\delta^C_2 =\bigl( \bar{C} \rMono C \rTTo^{\delta_2} C\tens C \rTTo^{\opr_C\tens\opr_C} \bar{C}\tens\bar{C} \bigr),
\]
we have
\[ \sfw\delta^C_2(\sfg_0\tens1 -1\tens\sfg_0) =(\sfw\sfg_0)\sfw -\sfw(\sfw\sfg_0) =0,
\]
which implies that
\begin{equation}
\begin{split}
\delta^C_2(\sfg_0\tens1 -1\tens\sfg_0) &=\opr_C(\bar\delta^C_2+1\tens\sfw+\sfw\tens1)(\sfg_0\tens1 -1\tens\sfg_0)
\\
&=\opr_C\bar\delta^C_2(\sfg_0\tens1 -1\tens\sfg_0).
\end{split}
\label{eq-delta-C2(g1-1g)-prC-delta-C2(g1-1g)}
\end{equation}
Since
\[ \sfw\delta^C_2(\sfg_0\tens\sfg_0) =(\sfw\sfg_0)^2 =0,
\]
we find that
\[ \delta^C_2(\sfg_0\tens\sfg_0) =\opr_C(\bar\delta^C_2+1\tens\sfw+\sfw\tens1)(\sfg_0\tens\sfg_0) =\opr_C\bar\delta^C_2(\sfg_0\tens\sfg_0).
\]
Thus \defref{def-morphism-curved-augmented-coalgebras} can be reformulated as follows.

\begin{definition}
A \emph{morphism of curved augmented coalgebras} \(\sfg:C\to D\) is a triple \((\sfg_1,\sfg_0',\und g)\) consisting of a homomorphism of augmented graded coalgebras \(\sfg_1:C\to D\), a $\kk$\n-linear map \(\sfg_0':\bar C\to\kk\) of degree~1 and an element \(\und g\in\kk^1\) (of degree~1) such that
\begin{align*}
\delta^C_1\sfg_1 +\opr_C\bar\delta^C_2(\sfg_0'\tens\sfg_1 -\sfg_1\tens\sfg_0') &=\sfg_1\delta^D_1: C \to \bar D,
\\
\delta^C_0 -\delta^C_1\sfg_0' -\opr_C\bar\delta^C_2(\sfg_0'\tens\sfg_0') &=\sfg_1\delta^D_0: C \to \kk.
\end{align*}
The composition $\sfh:C\to E$ of morphisms \(\sff:C\to D\) and \(\sfg:D\to E\) is given by \(\sfh_1=\sff_1\sfg_1\), \(\sfh_0'=\sff_0'+\sff_1\sfg_0'\), \(\und h=\und f+\und g\).
The unit morphism is \((\id,0,0)\).
\end{definition}

\section{Bar and cobar constructions}
We are going to prove existence of two functors between categories of curved algebras and curved coalgebras, generalizing the well known bar and cobar constructions.

\subsection{Bar-construction.}
Let us construct a functor \(\Bbar:\UCCAlg\to\CACoalg\), the bar-\hspace{0pt}construction.
Let \(A=(A,(b_n)_{n\ge0},\bfeta,\bv)\) be a strict-unit-complemented curved \ainf-algebra.
The shift \(\bar{A}[1]\) of the $\kk$\n-submodule \(\bar{A}=\Ker\sfv\subset A\) is the image of an idempotent \(1-\bv\cdot\bfeta:A[1]\to A[1]\), which we write as the projection \(\opr=1-\bv\cdot\bfeta:A[1]\to{A}[1]\).
Define \(\Bbar A\) as \(\bar{A}[1]T^\ge\) equipped with the cut comultiplication \(\delta^{\Bbar A}_2\), the counit \(\eps^{\Bbar A}=\pr_0:\bar{A}[1]T^\ge\to\bar{A}[1]T^0=\kk\), the splitting \(\sfw^{\Bbar A}=\inj_0:\kk=\bar{A}[1]T^0\hookrightarrow\bar{A}[1]T^\ge\), the degree~1 coderivation \(\delta^{\Bbar A}_1=\bar{b}:\bar{A}[1]T^\ge\to\bar{A}[1]T^\ge\) given by its components
\[ \bar{b}_n =\bigl( \bar{A}[1]^{\tens n} \rMono A[1]^{\tens n} \rTTo^{b_n} A[1] \rTTo^\opr \bar{A}[1] \bigr), \qquad n\ge0,
\]
and a degree~2 functional
\[ \delta^{\Bbar A}_0 =-\bigl( \bar{A}[1]T^\ge \rMono A[1]T^\ge \rTTo^{\check{b}} A[1] \rTTo^\bv \kk \bigr).
\]
Clearly, \(\sfw^{\Bbar A}\) is a graded coalgebra homomorphism and the coalgebra \(\overline{\bar{A}[1]T^\ge}=\bar{A}[1]T^>\) with the cut comultiplication is conilpotent.

Let us verify the necessary identities.
Both sides of the equation
\[ (\delta^{\Bbar A}_1)^2 =\delta^{\Bbar A}_2(1\tens\delta^{\Bbar A}_0 -\delta^{\Bbar A}_0\tens1): \bar{A}[1]T^\ge \to \bar{A}[1]T^\ge
\]
are coderivations.
Hence, the equation is equivalent to its composition with \(\pr_1:\bar{A}[1]T^\ge\to\bar{A}[1]\).
That is, to
\[ \sum_{r+k+t=n} (1^{\tens r}\tens\bar b_k\tens1^{\tens t})\bar b_{r+1+t} =b_{n-1}\bv\tens1 -1\tens b_{n-1}\bv: \bar A[1]^{\tens n} \to \bar A[1]
\]
for all $n\ge0$.
This holds true due to computation
\begin{multline*}
\sum_{r+k+t=n} (1^{\tens r}\tens b_k(1-\bv\bfeta)\tens1^{\tens t})b_{r+1+t}\opr
\\
=-(1\tens b_{n-1}\bv\bfeta)b_2\opr -(b_{n-1}\bv\bfeta\tens1)b_2\opr =b_{n-1}\bv\tens1 -1\tens b_{n-1}\bv.
\end{multline*}

Furthermore,
\[ \delta^{\Bbar A}_1 \delta^{\Bbar A}_0 =-\bigl( \bar{A}[1]T^\ge \rTTo^{\bar b} \bar{A}[1]T^\ge \rTTo^{\check{b}} A[1] \rTTo^\bv \kk \bigr)
\]
vanishes due to
\begin{multline*}
-\sum_{r+k+t=n} (1^{\tens r}\tens b_k(1-\bv\bfeta)\tens1^{\tens t})b_{r+1+t}\bv
\\
=(1\tens b_{n-1}\bv\bfeta)b_2\bv +(b_{n-1}\bv\bfeta\tens1)b_2\bv  =\bv\tens b_{n-1}\bv -b_{n-1}\bv\tens\bv =0: \bar A[1]^{\tens n} \to \kk,
\end{multline*}
because \(\bar A[1]\bv=0\).
Thus the object $\Bbar A$ of $\CACoalg$ is well-defined.

Let us describe the functor \(\Bbar:\UCCAlg\to\CACoalg\) on morphisms.
It takes a morphism \(f=(f_1,f_0):A\to B\) to the morphism
\[ \sfBar f =\sfg =(\sfg_1,\sfg_0): \bar{A}[1]T^\ge \to \bar{B}[1]T^\ge,
\]
where the coalgebra homomorphism \(\sfBar_1f=\sfg_1=\bar{f}\) is specified by its components
\begin{equation}
\begin{split}
\bar f_1 &=\bigl( \bar{A}[1] \rMono A[1] \rTTo^{f_1} B[1] \rTTo^{1-\bv\cdot\bfeta} \bar{B}[1] \bigr),
\\
\bar f_0 &=\bigl( \kk \rTTo^{f_0} B[1] \rTTo^{1-\bv\cdot\bfeta} \bar{B}[1] \bigr) =0,
\end{split}
\label{eq-f1f0-A1k}
\end{equation}
and the degree~1 functional is
\begin{equation}
\sfBar_0f =\sfg_0 =\bigl( \bar{A}[1]T^\ge \rMono A[1]T^\ge \rTTo^{\check{f}} B[1] \rTTo^\bv \kk \bigr).
\label{eq-f0-g0-A1T}
\end{equation}
Notice that the coalgebra homomorphism $\bar{f}$ is strict, that is, it has only one non-vanishing component -- the first.
Thus, $\bar{f}$ preserves the number of tensor factors,
\[ \bar{f}| =\bar f_1^{\tens n}: \bar{A}[1]^{\tens n} \to \bar{B}[1]^{\tens n}, \qquad n\ge0.
\]
In particular, \(\sfw^{\Bbar A}\bar f=\sfw^{\Bbar B}\).

Let us check that $\sfg$ is indeed a morphism of $\CACoalg$.
It is required that
\[ \bar b^A\sfg_1 +\Delta(\sfg_0\tens\sfg_1 -\sfg_1\tens\sfg_0) =\sfg_1\bar b^B.
\]
All terms of this equation are $\bar{f}$\n-coderivations.
Hence, the equation follows from its composition with $\pr_1$:
\[ \bar b^A\check{\bar{f}} +\Delta(\sfg_0\tens\check{\bar{f}} -\check{\bar{f}}\tens\sfg_0) =\bar f\check{\bar{b}}^B: \bar{A}[1]T^\ge \to \bar{B}[1],
\]
that is, for all $n\ge0$
\begin{multline*}
\bar b_n\bar f_1 +f_0\bv\tens\bar f_n +f_1\bv\tens\bar f_{n-1} -\bar f_n\tens f_0\bv -\bar f_{n-1}\tens f_1\bv
\\
=\sum_{i_1+\dots+i_k=n} (\bar f_{i_1}\tens\bar f_{i_2}\tdt\bar f_{i_k})\bar b_k: \bar A[1]^{\tens n} \to \bar B[1].
\end{multline*}
In detail,
\begin{multline*}
b_n(1-\bv\bfeta)f_1\opr +\sum_{i_1+i_2=n} \bigl( f_{i_1}\bv\tens f_{i_2}\opr -f_{i_1}\opr\tens f_{i_2}\bv \bigr)
\\
=\sum_{i_1+\dots+i_k=n} [f_{i_1}(1-\bv\bfeta)\tdt f_{i_k}(1-\bv\bfeta)]b_k\opr.
\end{multline*}
Cancelling the summands without $\bv$ we reduce the equation to the valid identity
\begin{multline*}
\sum_{i_1+i_2=n} \bigl( f_{i_1}\bv\tens f_{i_2}\opr -f_{i_1}\opr\tens f_{i_2}\bv \bigr) =-\sum_{i_1+i_2=n} \bigl[ (f_{i_1}\bv\bfeta\tens f_{i_2})b_2\opr +(f_{i_1}\tens f_{i_2}\bv\bfeta)b_2\opr \bigr].
\end{multline*}

Another equation to prove,
\[ \check b^A\bv +\bar b^A\check f\bv +\Delta(\check{f}\bv\tens\check{f}\bv) =\bar f\check{b}^B\bv:  \bar{A}[1]T^\ge \to \kk,
\]
is written explicitly as
\begin{multline*}
b_n\bv +b_n(1-\bv\bfeta)f_1\bv +\sum_{i_1+i_2=n} f_{i_1}\bv\tens f_{i_2}\bv 
\\
=\sum_{i_1+\dots+i_k=n} [f_{i_1}(1-\bv\bfeta)\tdt f_{i_k}(1-\bv\bfeta)]b_k\bv: \bar{A}[1]^{\tens n} \to \kk.
\end{multline*}
Cancelling the first and the third summands as well as summands that contain $\bv$ only at the end, we obtain the valid equation
\begin{multline*}
\sum_{i_1+i_2=n} f_{i_1}\bv\tens f_{i_2}\bv =-\sum_{i_1+i_2=n} \bigl[ (f_{i_1}\bv\bfeta\tens f_{i_2})b_2\bv +(f_{i_1}\tens f_{i_2}\bv\bfeta)b_2\bv +(f_{i_1}\bv\tens f_{i_2}\bv)\bfeta\bv \bigr].
\end{multline*}

The identity morphism $f=(\id,0)$ is mapped to the identity morphism $\sfBar f=(\id,0)$.
Let us verify that $\Bbar$ agrees with the composition.
If $h=fg$ in $\UCCAlg$, \(h_1=f_1g_1\), \(h_0=g_0+f_0g_1\), then $\bar h=\bar f\bar g$.
In fact, the equation
\[ \sum_{i_1+\dots+i_k=n} (\bar f_{i_1}\tens\bar f_{i_2}\tdt\bar f_{i_k})\bar g_k =\bar h_n
\]
has the only non-vanishing realization $\bar f_1\bar g_1=\bar h_1$.
Furthermore,
\[ \sfBar_0f +(\sfBar_1f) \cdot \sfBar_0g =\sfBar_0h
\]
since
\[ \check f\bv +\bar f\check g\bv =\check{h}\bv: \bar{A}[1]T^\ge \to \kk.
\]
In fact, in arity $n$ the left hand side is
\[ f_n\bv +f_n(1-\bv\bfeta)g_1\bv +\delta_{n,0}g_0\bv =(f_ng_1 +\delta_{n,0}g_0)\bv =h_n\bv.
\]
The functor \(\Bbar:\UCCAlg\to\CACoalg\) (the bar-construction) is described.

\subsection{Cobar-construction.}
Let us construct a functor \(\Cobar:\CACoalg\to\UCCAlg\), the cobar-\hspace{0pt}construction.
Let \(C=(C,(\xi_n)_{n\ge0},\beps,\bw)\) be a strict-counit-complemented curved \ainf-coalgebra.
The shift \(\bar{C}[-1]\) of the $\kk$\n-submodule \(\bar{C}=\Ker\eps\subset C\) is the image of an idempotent \(1-\beps\cdot\bw:C[-1]\to C[-1]\), which we write as the projection \(\opr=1-\beps\cdot\bw:C[-1]\to\bar{C}[-1]\).
Define \(\Cobar C\) as \(\bar{C}[-1]T^\ge\) equipped with the multiplication \(m^{\Cobar C}_2\) in the tensor algebra, the unit
\(\eta^{\Cobar C}=\inj_0:\kk=\bar{C}[-1]T^0\hookrightarrow\bar{C}[-1]T^\ge\), the splitting \(\sfv^{\Cobar C}=\pr_0:\bar{C}[-1]T^\ge\to\bar{C}[-1]T^0=\kk\), the degree~1 derivation \(m^{\Cobar C}_1=\bar\xi:\bar{C}[-1]T^\ge\to\bar{C}[-1]T^\ge\) given by its components
\[ \bar\xi_n =\bigl( \bar{C}[-1] \rMono C[-1] \rTTo^{\xi_n} C[-1]^{\tens n} \rTTo^{\opr^{\tens n}} \bar{C}[-1]^{\tens n} \bigr), \qquad n\ge0,
\]
and a degree~2 element
\[ m^{\Cobar C}_0 =-\bw\tens\bw -\sum_{n\ge0}\bw\xi_n \in \bar{C}[-1]\hat T^\ge.
\]
For general curved \ainf-coalgebra $C$ the element \(m^{\Cobar C}_0\) does not belong to \(\bar{C}[-1]T^\ge\), however, if $C$ is a curved augmented coalgebra, then it does.
Conilpotency of $\bar{C}$ is not needed for existence of $\Cobar C$.
Let us verify necessary identities.

If $n\ne2$, then \(\bar\xi_n=\xi_n\big|_{\bar{C}[-1]}\).
Furthermore,
\[ \bar\xi_2 =\xi_2\big|_{\bar{C}[-1]}\cdot [(1-\beps\bw)\tens(1-\beps\bw)] =\xi_2\big|_{\bar{C}[-1]} +1\tens\bw -\bw\tens1.
\]
Extension of this map satisfies
\begin{equation}
\bar\xi_2 =\xi_2[(1-\beps\bw)\tens(1-\beps\bw)] =\xi_2 +1\tens\bw -\bw\tens1 -\beps(\bw\tens\bw): C[-1] \to C[-1]^{\tens2}.
\label{eq-xi2-xi2-1ww1eww}
\end{equation}
Both sides of the equation
\[ (m^{\Cobar C}_1)^2 =(m^{\Cobar C}_0\tens1 -1\tens m^{\Cobar C}_0) m^{\Cobar C}_2
\]
are derivations.
It is equivalent to its restriction to generators $\bar{C}[-1]$:
\begin{equation}
\sum_{r+k+t=n}\bar\xi_{r+1+t}(1^{\tens r}\tens\bar\xi_k\tens1^{\tens t}) =(m^{\Cobar C}_0)_{n-1}\tens1 -1\tens(m^{\Cobar C}_0)_{n-1}: \bar C[-1]\to\bar C[-1]^{\tens n}.
\label{eq-xi-xi-m0-m0}
\end{equation}
Let us prove this for
\[ (m^{\Cobar C}_0)_2 =-\bw\tens\bw -\bw\xi_2 =0, \qquad (m^{\Cobar C}_0)_{n-1} =-\bw\xi_{n-1} \quad \text{if } n\ne3.
\]
In fact, \eqref{eq-xi-xi-m0-m0} is obvious for $n=0$.
It says for $n=1$ that
\[ \xi_1^2 +\bar\xi_2(1\tens\xi_0 +\xi_0\tens1) =(1\tens\bw -\bw\tens1) (1\tens\xi_0 +\xi_0\tens1) =(\bw\xi_0) -\xi_0\bw +\xi_0\bw -(\bw\xi_0) =0
\]
as it has to be.
If $n=2$ or $n\ge4$, then the left hand side of \eqref{eq-xi-xi-m0-m0} is
\begin{align*}
&\xi_1\xi_n +\bar\xi_2(\xi_{n-1}\tens1 +1\tens\xi_{n-1}) +\dots+\xi_{n-1}\sum_{r+2+t=n}1^{\tens r}\tens\bar\xi_2\tens1^{\tens t} +\dots
\\
&=(1\tens\bw -\bw\tens1)(\xi_{n-1}\tens1 +1\tens\xi_{n-1})
\\
&\hspace*{7em} +\xi_{n-1}\sum_{r+2+t=n}(1^{\tens(r+1)}\tens\bw\tens1^{\tens t} -1^{\tens r}\tens\bw\tens1^{\tens(1+t)})
\\
&=-\xi_{n-1}\tens\bw +1\tens\bw\xi_{n-1} -\bw\xi_{n-1}\tens1 -\bw\tens\xi_{n-1} +\xi_{n-1}(1^{\tens(n-1)}\tens\bw -\bw\tens1^{\tens(n-1)})
\\
&=1\tens\bw\xi_{n-1} -\bw\xi_{n-1}\tens1,
\end{align*}
as claimed.
If $n=3$, then the left hand side of \eqref{eq-xi-xi-m0-m0} is
\begin{align*}
&\xi_1\xi_3 +\bar\xi_2(\bar\xi_2\tens1 +1\tens\bar\xi_2) +\dots
\\
&=(\xi_2 +1\tens\bw -\bw\tens1)[(\xi_2 -\bw\tens1)\tens1 +1\tens(\xi_2 +1\tens\bw)] -\xi_2(\xi_2\tens1 +1\tens\xi_2)
\\
&=1\tens(\bw\xi_2 +\bw\tens\bw) -(\bw\xi_2 +\bw\tens\bw)\tens1 =0,
\end{align*}
as claimed.

The expression \(m^{\Cobar C}_0m^{\Cobar C}_1\) is a well-defined element of \(\bar{C}[-1]\hat T^\ge\).
Its $n$\n-th component is
\begin{align}
&m^{\Cobar C}_0m^{\Cobar C}_1\pr_n
=-\bw\tens\bw\bar\xi_{n-1} +\bw\bar\xi_{n-1}\tens\bw -\sum_{r+k+t=n}\bw\xi_{r+1+t}(1^{\tens r}\tens\bar\xi_k\tens1^{\tens t}) \notag
\\
&=-\bw\tens\bw\bar\xi_{n-1} +\bw\bar\xi_{n-1}\tens\bw -\bw\xi_{n-1}\sum_{r+2+t=n}(1^{\tens(r+1)}\tens\bw\tens1^{\tens t} -1^{\tens r}\tens\bw\tens1^{\tens(1+t)}) \notag
\\
&=-\bw\tens\bw\bar\xi_{n-1} +\bw\bar\xi_{n-1}\tens\bw -\bw\xi_{n-1}(1^{\tens(n-1)}\tens\bw -\bw\tens1^{\tens(n-1)}).
\label{eq-m0m1-wwwwwwwwwwww}
\end{align}
If $n\ne3$, then \(\bar\xi_{n-1}=\xi_{n-1}\) and the obtained expression equals
\[ -\bw\tens\bw\xi_{n-1} +\bw\xi_{n-1}\tens\bw -\bw\xi_{n-1}\tens\bw +\bw\tens\bw\xi_{n-1} =0.
\]
If $n=3$, then \eqref{eq-m0m1-wwwwwwwwwwww} equals
\[ -\bw\tens[\bw(1\tens\bw -\bw\tens1)] +[\bw(1\tens\bw -\bw\tens1)]\tens\bw =\bw\tens\bw\tens\bw(-1-1+1+1) =0.
\]
Thus \(m^{\Cobar C}_0m^{\Cobar C}_1=0\).
We obtain a map \(\Ob\CACoalg\to\Ob\UCCAlg\).

Let us describe the functor \(\Cobar:\CACoalg\to\UCCAlg\) on morphisms.
It takes a morphism \(g=(g_1,g_0):C\to D\) to the morphism
\[ \sfCobar g =\sff =(\sff_1,\sff_0): \bar{C}[-1]T^\ge \to \bar{D}[-1]T^\ge,
\]
where the algebra homomorphism \(\sfCobar_1g=\sff_1=\bar{g}\) is specified by its components
\begin{equation}
\begin{split}
\bar g_1 &=g_1 =\bigl( \bar{C}[-1] \rMono C[-1] \rTTo^{g_1} D[-1] \rTTo^\opr \bar{D}[-1] \bigr),
\\
\bar g_0 &=g_0'=\bigl( \bar{C}[-1] \rMono C[-1] \rTTo^{g_0} \kk \bigr),
\label{eq-g1g0g0}
\end{split}
\end{equation}
and the degree~1 element is
\begin{equation*}
\sfCobar_0g =\sff_0 =\bigl( \kk \rTTo^\bw C[-1] \rTTo^{\check{g}} D[-1]T^\ge \rTTo^{\opr T^\ge} \bar{D}[-1]T^\ge \bigr),
\end{equation*}
which we write as \(\bw\check{\bar{g}}\) extending the notation.
This element has the only non-vanishing component
\begin{align*}
\sff_{00} &=\bigl( \kk \rTTo^\bw C[-1] \rTTo^{g_0} \kk \bigr) =\bw g_0.
\\
\intertext{In fact,}
\sff_{01} &=\bigl( \kk \rTTo^\bw C[-1] \rTTo^{g_1} D[-1] \rTTo^\opr \bar{D}[-1] \bigr) =\bw g_1\opr =0.
\end{align*}
Thus,
\begin{align}
\sfCobar_0g =\sff_0 &=\bigl( \kk \rTTo^\bw C[-1] \rTTo^{g_0} \kk \rMono^{\inj_0} \bar{D}[-1]T^\ge \bigr), \notag
\\
\und{\Cobar g} =\und f &=\bigl( \kk \rTTo^\sfw C \rTTo^{\sfg_0} \kk \bigr) =\und g.
\label{eq-und-Cobar-g-und-f}
\end{align}

Let us check that $\sff$ is indeed a morphism of $\UCCAlg$.
It is required that
\[ \bar g\bar\xi +(\bar g\tens\sff_0 -\sff_0\tens\bar g)m_2 =\bar\xi\bar g.
\]
The second term vanishes, but this form of equation is easier to deal with.
All terms of this equation are $\bar{g}$\n-derivations.
Hence, the equation is equivalent to its restriction to $\bar{C}[-1]$:
\[ \check{\bar{g}}\bar\xi +(\check{\bar{g}}\tens\sff_0 -\sff_0\tens\check{\bar{g}})m_2 =\check{\bar{\xi}}\bar g: \bar{C}[-1] \to \bar{D}[-1]T^\ge,
\]
which means that for all $n\ge0$
\begin{multline*}
\bar g_1\bar\xi_n +(\bar g_n\tens\bw g_0 +\bar g_{n-1}\tens\bw g_1\opr -\bw g_0\tens\bar g_n -\bw g_1\opr\tens\bar g_{n-1})m_2
\\
=\sum_{i_1+\dots+i_k=n} \bar\xi_k(\bar g_{i_1}\tens\bar g_{i_2}\tdt\bar g_{i_k}): \bar C[-1] \to \bar D[-1]^{\tens n}.
\end{multline*}
When written explicitly,
\begin{multline*}
g_1(1-\beps\bw)\xi_n\opr^{\tens n}
\\
+(g_n\opr^{\tens n}\tens\bw g_0 +g_{n-1}\opr^{\tens(n-1)}\tens\bw g_1\opr -\bw g_0\tens g_n\opr^{\tens n} -\bw g_1\opr\tens g_{n-1}\opr^{\tens(n-1)})m_2
\\
=\sum_{i_1+\dots+i_k=n} \xi_k(g_{i_1}\tdt g_{i_k})\opr^{\tens n} +\sum_{i_1+i_2=n} \bigl( g_{i_1}\tens\bw g_{i_2} -\bw g_{i_1}\tens g_{i_2} \bigr)\opr^{\tens n}: \bar C[-1] \to \bar D[-1]^{\tens n},
\end{multline*}
it becomes obvious.

Another equation must hold,
\[ -\bw\tens\bw -\bw\check\xi -\bw\check{\bar{g}}\bar\xi -(\bw\check{\bar{g}}\tens\bw\check{\bar{g}})m_2 =-\bw\check{\bar{g}}\tens\bw\check{\bar{g}} -\sum_{k\ge0}\bw\xi_k\check{\bar{g}}^{\tens k}: \kk\to\bar{D}[-1]T^\ge.
\]
After cancelling two summands and changing the sign the equation is written as
\begin{equation*}
\delta_{n,2}\bw\tens\bw +\bw\xi_n +\bw\bar{g}_1\bar\xi_n =\sum_{i_1+\dots+i_k=n} \bw\xi_k(\bar g_{i_1}\tens\bar g_{i_2}\tdt\bar g_{i_k}): \kk \to \bar D[-1]^{\tens n}.
\end{equation*}
Explicitly:
\begin{equation*}
\delta_{n,2}\bw\tens\bw +\bw\xi_n +\bw g_1(1-\beps\bw)\bar\xi_n =\sum_{i_1+\dots+i_k=n} \bw\xi_k(g_{i_1}\tens g_{i_2}\tdt g_{i_k})\opr^{\tens n}.
\end{equation*}
Cancelling $\bw g_1\bar\xi_n$ against the right hand side we come to the valid equation
\begin{equation*}
\delta_{n,2}\bw\tens\bw +\bw\xi_n -\bw\bar\xi_n =0: \kk \to \bar D[-1]^{\tens n}.
\end{equation*}
In fact, \(\bar\xi_n=\xi_n\) for $n\ne2$ and the equation is obvious.
For $n=2$ we have by \eqref{eq-xi2-xi2-1ww1eww}
\[ \bw\tens\bw +\bw\xi_2 -\bw\xi_2 -\bw(1\tens\bw) +\bw(\bw\tens1) +\bw\beps(\bw\tens\bw) =0.
\]

The identity morphism $g=(\id,0)$ is mapped to the identity morphism $\sfCobar g=(\id,0)$.
Let us verify that $\Cobar$ agrees with the composition.
If $h=\bigl(C\rto fD\rto gE\bigr)$ in $\CACoalg$, \(h_1=f_1g_1\), \(h_0=f_0+f_1g_0\), then
\[ (\sfCobar_1f)\cdot\sfCobar_1g =\bar f\bar g =\bar h =\sfCobar_1h.
\]
In fact, the equation
\[ \sum_{i_1+\dots+i_k=n} \bar f_k(\bar g_{i_1}\tens\bar g_{i_2}\tdt\bar g_{i_k}) =\bar h_n: \bar C[-1] \to \bar E[-1]^{\tens n}
\]
for $n=1$ holds due to
\[ \bar f_1\bar g_1 =f_1(1-\beps\bw)g_1\opr =f_1g_1\opr =h_1\opr =\bar{h}_1: \bar C[-1] \to \bar E[-1],
\]
and for $n=0$ it holds due to
\[ \bar f_0 +\bar f_1\bar g_0 =f_0 +f_1(1-\beps\bw)g_0 =f_0+f_1g_0 =h_0 =\bar{h}_0: \bar C[-1] \to \kk.
\]

Furthermore,
\[ \sfCobar_0g +(\sfCobar_0f) \cdot \sfCobar_1g =\sfCobar_0h
\]
since
\[ \bw\check{\bar{g}} +\bw\check{\bar{f}}\bar g=\bw\check{\bar{h}}: \kk \to \bar{E}[-1]T^\ge.
\]
In fact, the $n$\n-th component of the left hand side is
\[ \bw\bar{g}_n +\sum_{i_1+\dots+i_k=n} \bw\bar f_k(\bar g_{i_1}\tens\bar g_{i_2}\tdt\bar g_{i_k}): \kk\to \bar E[-1]^{\tens n},
\]
which for $n=1$ transforms to
\[ \bw\bar g_1 +\bw\bar f_1\bar g_1 = \bw\bar g_1 +\bw f_1(1-\beps\bw)\bar g_1 =\bw f_1g_1\opr =\bw\bar{h}_1: \kk\to \bar E[-1],
\]
and for $n=0$ equals
\[ \bw\bar g_0 +\bw\bar f_0 +\bw\bar f_1\bar g_0 =\bw\bar g_0 +\bw\bar f_0 +\bw f_1(1-\beps\bw)\bar g_0 =\bw(f_0+f_1g_0)\opr =\bw\bar{h}_0.
\]
The functor \(\Cobar:\CACoalg\to\UCCAlg\) (the cobar-construction) is described.

\section{Adjunction}
We are showing that the two (bar and cobar) constructions are functors, adjoint to each other.
The adjunction bijection will be the top row of the following diagram.
The two middle rows are natural transformations defined so that the two lower squares commute:
\[
\begin{diagram}[inline]
\UCCAlg(\bar{C}[-1]T^\ge,A) &\rDashTo &\CACoalg(C,\bar A[1]T^\ge)
\\
\dMono &&\dMono
\\
\gr\alg(\bar{C}[-1]T^\ge,A)\times\gr(\kk[-1],\kk) &\rTTo_\sim &\gr\text-\nuCoalg(\bar{C},\bar A[1]T^>) \times\gr(C,\kk[1])
\\
\dTTo<\wr &&\dTTo<\wr>{\_\cdot\pr_1\times\id}
\\
\gr(\bar{C}[-1],A)\times\gr(\kk[-1],\kk) &\rTTo_\sim &\gr(\bar{C},\bar A[1])\times\gr(C,\kk[1])
\\
\dEq && \dEq
\\
\gr(\bar{C}[-1],\bar A)\times\gr(\bar{C}[-1],\kk)\times\gr(\kk[-1],\kk) &\rTTo_\sim^{[1]} &\gr(\bar C,\bar A[1])\times\gr(\bar C,\kk[1])\times\gr(\kk,\kk[1])
\end{diagram}
\]
Notice that the set of morphisms of augmented graded coalgebras \(C\to\bar A[1]T^\ge\) is in bijection with the set of morphisms of graded non-counital coalgebras \(\gr\text-\nuCoalg(\bar{C},\bar A[1]T^>)\).
The functor \(X\mapsto XT^>=\oplus_{n>0}X^{\tens n}\) has the structure of a comonad and $T^>$\n-coalgebras are precisely conilpotent non-counital coalgebras \cite[Section~6.7]{BesLyuMan-book}.
Since $\bar C$ is conilpotent, the arrow $\_\cdot\pr_1\oplus\id$ is a bijection by the well known lemma on Kleisli categories (generalized to multicategories in \cite[Lemma~5.3]{BesLyuMan-book}).
Thus the second horizontal map is a bijection as well.
Morphisms \(\sff:\bar{C}[-1]T^\ge\to A\in\UCCAlg\) and \(\sfg:C\to\bar A[1]T^\ge\in\CACoalg\) are related as the following diagram shows.
It consists of elements (morphisms of degree~0) of vertices of the previous diagram.
For instance, \(g^1_1=\check\sfg_1=(\check\sff_1\opr)[1]=f_1^1[1]\), etc.
Equivalently,
\begin{subequations}
\begin{alignat}2
\sigma^{-1}\check\sff_1\opr &=\check\sfg_1\sigma^{-1} &: \bar C &\to \bar A,
\label{eq-sfp-gs}
\\
\sigma^{-1}\check\sff_1\sfv &=\sfg_0|_{\bar{C}} &: \bar C &\to \kk,
\label{eq-sfv-g}
\\
\und f &=\sfw\sfg_0 &: \kk &\to \kk,
\label{eq-f-wg}
\end{alignat}
\end{subequations}
where all components are listed in
\begin{diagram}[LaTeXeqno]
\sff &\rMapsTo &\sfg
\\
\dMapsTo &&\dMapsTo
\\
(\sff_1,\sigma\und f) &\rMapsTo &(\sfg_1,\sfg_0\sigma)
\\
\dMapsTo &&\dMapsTo
\\
(\check\sff_1,\sigma\und f) &\rMapsTo &(\check\sfg_1,\sfg_0\sigma)
\\
\dMapsTo &&\dMapsTo
\\
(\check\sff_1\opr,\check\sff_1\sfv,\sigma\und f) &\rMapsTo^{[1]} &(\check\sfg_1,\sfg_0|_{\bar{C}}\sigma,\sfw\sfg_0\sigma)
\label{dia-f-g-fsf-ggs}
\end{diagram}

We are going to show that systems of equations on pairs \((\sff_1,\sigma\und f)\) and \((\sfg_1,\sfg_0\sigma)\) saying that these pairs are morphisms of $\UCCAlg$ and $\CACoalg$ are equivalent.
In fact, these systems are
\begin{equation}
\begin{split}
\check\sff_1m^A_1 &=\check m^{\Cobar C}_1\sff_1: \bar{C}[-1]\to A,
\\
m^A_0 &=m^{\Cobar C}_0\sff_1: \kk \to A,
\end{split}
\label{eq-fm-mf-m-mf}
\end{equation}
\begin{equation}
\begin{split}
\delta^C_1\check\sfg_1 +\delta^C_2(\sfg_0\tens1 -1\tens\sfg_0)\check\sfg_1 &=\sfg_1\check\delta^{\Bbar A}_1: C\to \bar A[1],
\\
\delta^C_0 -\delta^C_1\sfg_0 -\delta^C_2(\sfg_0\tens\sfg_0) &=\sfg_1\delta^{\Bbar A}_0: C\to \kk.
\end{split}
\label{eq-dg-dg11gg-gd-d-dg}
\end{equation}
Note that the image of any coderivation $C\to C$ is contained in $\bar{C}=\Ker\eps$.
In more detail equations~\eqref{eq-fm-mf-m-mf} and \eqref{eq-dg-dg11gg-gd-d-dg} read
\begin{equation}
\begin{split}
\check\sff_1m^A_1 &= \xi_0\eta +\xi_1\check\sff_1 +\bar\xi_2(\check\sff_1\tens\check\sff_1)m^A_2: \bar{C}[-1]\to A,
\\
m^A_0 &= -\bw\xi_0\eta^A -\bw\xi_1\check\sff_1: \kk \to A,
\end{split}
\label{eq-fm-xe-xf-xffm}
\end{equation}
\begin{gather}
\delta^C_1\check\sfg_1 +\delta^C_2(\sfg_0\tens1 -1\tens\sfg_0)\check\sfg_1 \hspace*{18em}\notag
\\
=\eps^Cb^A_0\opr_A +\opr_C\check\sfg_1b^A_1\opr_A +\opr_C\bar\delta^C_2(\check\sfg_1\tens\check\sfg_1)b^A_2\opr_A: C\to \bar A[1],
\label{eq-dg-dg11gg-ebp-pgbp-dpgpgbp}
\\
\delta^C_0 -\delta^C_1\sfg_0 -\delta^C_2(\sfg_0\tens\sfg_0) =-\eps^Cb^A_0\bv -\opr_C\check\sfg_1b^A_1\bv -\opr_C\bar\delta^C_2(\check\sfg_1\tens\check\sfg_1)b^A_2\bv: C\to \kk. \notag
\end{gather}
Let us rewrite systems \eqref{eq-fm-xe-xf-xffm} and \eqref{eq-dg-dg11gg-ebp-pgbp-dpgpgbp} splitting each equation in two accordingly to splitting the target $A$ or the source $C$ in two summands
\begin{subequations}
\begin{alignat}2
\check\sff_1m^A_1\opr_A &= \xi_1\check\sff_1\opr_A +\bar\xi_2(\check\sff_1\tens\check\sff_1)m^A_2\opr_A:\, & \bar{C}[-1] &\to \bar A,
\\
\check\sff_1m^A_1\sfv &= \xi_0 +\xi_1\check\sff_1\sfv +\bar\xi_2(\check\sff_1\tens\check\sff_1)m^A_2\sfv:\, & \bar{C}[-1] &\to \kk,
\\
m^A_0\opr_A &= -\bw\xi_1\check\sff_1\opr_A: & \kk &\to \bar A,
\\
m^A_0\sfv &= -\bw\xi_0 -\bw\xi_1\check\sff_1\sfv: & \kk &\to \kk,
\end{alignat}
\label{eq-fmp-fmv-mp-mv}
\end{subequations}
\begin{subequations}
\begin{alignat}2
\delta^C_1\check\sfg_1 +\delta^C_2(\sfg_0\tens1 -1\tens\sfg_0)\check\sfg_1 &= \check\sfg_1b^A_1\opr_A +\bar\delta^C_2(\check\sfg_1\tens\check\sfg_1)b^A_2\opr_A:\, & \bar C &\to \bar A[1], 
\label{eq-dg-d-wd-wd-a}
\\
\delta^C_0 -\delta^C_1\sfg_0 -\bar\delta^C_2(\sfg_0\tens\sfg_0) &= -\check\sfg_1b^A_1\bv -\bar\delta^C_2(\check\sfg_1\tens\check\sfg_1)b^A_2\bv: & \bar C &\to \kk,
\\
\sfw\delta^C_1\check\sfg_1 &= b^A_0\opr_A: & \kk &\to \bar A[1],
\\
\sfw\delta^C_0 -\sfw\delta^C_1\sfg_0 &= -b^A_0\bv: & \kk &\to \kk.
\end{alignat}
\label{eq-dg-d-wd-wd}
\end{subequations}

We claim that equations (\ref{eq-fmp-fmv-mp-mv}x) and (\ref{eq-dg-d-wd-wd}x) are equivalent for x$\,\in\{$a,b,c,d$\}$.
In fact, let us rewrite the equations once again in the same order replacing $m$ and $\delta$ with their definitions and composing with $\sigma^{-1}$ wherever appropriate:
\begin{subequations}
\begin{alignat}2
\sigma^{-1}\check\sff_1\sigma b^A_1\sigma^{-1}\opr_A +\sigma^{-1}\xi_1\check\sff_1\opr_A +\sigma^{-1}\bar\xi_2(\check\sff_1\sigma\tens\check\sff_1\sigma)b^A_2\sigma^{-1}\opr_A &=0 :\, & \bar{C} &\to \bar A,
\\
\sigma^{-1}\check\sff_1\sigma b^A_1\sigma^{-1}\sfv +\sigma^{-1}\xi_0 +\sigma^{-1}\xi_1\check\sff_1\sfv +\sigma^{-1}\bar\xi_2(\check\sff_1\sigma\tens\check\sff_1\sigma)b^A_2\sigma^{-1}\sfv &=0 :\, & \bar{C} &\to \kk,
\\
b^A_0\sigma^{-1}\opr_A +\bw\xi_1\check\sff_1\opr_A &=0 : & \kk &\to \bar A,
\\
b^A_0\sigma^{-1}\sfv +\bw\xi_0 +\bw\xi_1\check\sff_1\sfv &=0 : & \kk &\to \kk.
\end{alignat}
\label{eq-sfsbsp-sfsbsv-bsp-bsv}
\end{subequations}
In transforming \eqref{eq-dg-d-wd-wd-a} use that
\[ \delta^C_2(\sfg_0\tens1 -1\tens\sfg_0) =\delta^C_2(\opr_C\tens\opr_C)(\sfg_0\tens1 -1\tens\sfg_0): \bar{C} \to C
\]
actually takes values in $\bar{C}$, see \eqref{eq-delta-C2(g1-1g)-prC-delta-C2(g1-1g)}.
The second system is
\begin{subequations}
\begin{alignat}2
\sigma^{-1}\xi^C_1\sigma\check\sfg_1\sigma^{-1} +\sigma^{-1}\bar\xi^C_2(\sigma\sfg_0\tens\sigma +\sigma\tens\sigma\sfg_0)\check\sfg_1\sigma^{-1} \hspace*{5em} &&& \notag
\\
+\check\sfg_1b^A_1\opr_A\sigma^{-1} +\sigma^{-1}\bar\xi^C_2(\sigma\check\sfg_1\tens\sigma\check\sfg_1)b^A_2\opr_A\sigma^{-1} &=0 :\, & \bar C &\to \bar A,
\\
\sigma^{-1}\xi^C_0 +\sigma^{-1}\xi^C_1\sigma\sfg_0 +\sigma^{-1}\bar\xi^C_2(\sigma\sfg_0\tens\sigma\sfg_0) +\check\sfg_1b^A_1\bv \hspace*{5em} &&& \notag
\\
+\sigma^{-1}\bar\xi^C_2(\sigma\check\sfg_1\tens\sigma\check\sfg_1)b^A_2\bv &=0 : & \bar C &\to \kk,
\\
\sfw\sigma^{-1}\xi^C_1\sigma\check\sfg_1\sigma^{-1} +b^A_0\opr_A\sigma^{-1} &=0 : & \kk &\to \bar A,
\\
\sfw\sigma^{-1}\xi^C_0 +\sfw\sigma^{-1}\xi^C_1\sigma\sfg_0 +b^A_0\bv &=0 : & \kk &\to \kk.
\end{alignat}
\label{eq-sxsgs-sx-wsxs-wsx}
\end{subequations}
Accordingly to our system of notation \(\sigma\opr=\opr\sigma\).
We shall use that \(\check\sff_1=\check\sff_1\opr_A+\check\sff_1\sfv\eta\).
Substituting relations \eqref{eq-sfp-gs} and \eqref{eq-sfv-g} into the above equations we find that the latter are pairwise equivalent.

In fact, (\ref{eq-sfsbsp-sfsbsv-bsp-bsv}c) is equivalent to (\ref{eq-sxsgs-sx-wsxs-wsx}c) and (\ref{eq-sfsbsp-sfsbsv-bsp-bsv}d) is equivalent to (\ref{eq-sxsgs-sx-wsxs-wsx}d).
Equivalence of (\ref{eq-sfsbsp-sfsbsv-bsp-bsv}a) and (\ref{eq-sxsgs-sx-wsxs-wsx}a) follows from the identity
\begin{equation*}
\sigma^{-1}\bar\xi_2(\check\sff_1\sfv\eta\tens\check\sff_1\opr_A +\check\sff_1\opr_A\tens\check\sff_1\sfv\eta)m^A_2\opr_A =\sigma^{-1}\bar\xi_2(\sigma\sfg_0|_{\bar C}\tens\sigma +\sigma\tens\sigma\sfg_0|_{\bar C})\check\sfg_1\sigma^{-1}. 
\end{equation*}
Equivalence of (\ref{eq-sfsbsp-sfsbsv-bsp-bsv}b) and (\ref{eq-sxsgs-sx-wsxs-wsx}b) follows from the identity
\begin{multline*}
\sigma^{-1}\bar\xi^C_2[(\check\sff_1\opr_A +\check\sff_1\sfv\eta)\tens(\check\sff_1\opr_A +\check\sff_1\sfv\eta)]m^A_2\sfv
\\
=\sigma^{-1}\bar\xi^C_2(\sigma\sfg_0\tens\sigma\sfg_0) +\sigma^{-1}\bar\xi^C_2(\sigma\check\sfg_1\tens\sigma\check\sfg_1) b^A_2\sigma^{-1}\sfv: \bar{C} \to \kk,
\end{multline*}
which can be expanded to
\begin{equation*}
\sigma^{-1}\bar\xi^C_2(\check\sff_1\opr_A\tens\check\sff_1\sfv\eta +\check\sff_1\sfv\eta\tens\check\sff_1\opr_A +\check\sff_1\sfv\eta\tens\check\sff_1\sfv\eta)]m^A_2\sfv =\sigma^{-1}\bar\xi^C_2(\sigma\sfg_0|_{\bar{C}}\tens\sigma\sfg_0|_{\bar{C}}): \bar{C} \to \kk.
\end{equation*}
The latter equation follows from the obvious one
\[ \sigma^{-1}\bar\xi^C_2(\check\sff_1\sfv\tens\check\sff_1\sfv) =\sigma^{-1}\bar\xi^C_2 (\sigma\sfg_0|_{\bar{C}}\tens\sigma\sfg_0|_{\bar{C}}): \bar{C} \to \kk.
\]
Hence, the bijection
\begin{equation}
\UCCAlg(\bar{C}[-1]T^\ge,A)\rto\sim \CACoalg(C,\bar A[1]T^\ge)
\label{eq-UCCAlg-CACoalg}
\end{equation}
is constructed.

\begin{theorem}\label{thm-Cobar-Bar-adjoint-to-each-other}
The functors
\[ \Cobar: \CACoalg \leftrightarrows \UCCAlg: \Bbar 
\]
are adjoint to each other.
\end{theorem}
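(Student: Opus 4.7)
The bijection \eqref{eq-UCCAlg-CACoalg} has been constructed componentwise in the preceding discussion. The plan is to check that this family of bijections is natural in both $C\in\CACoalg$ and $A\in\UCCAlg$, thereby upgrading it to an adjunction; no further existence or construction is required.

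First I would verify naturality in the algebra argument. Given a morphism $p:A\to A'$ in $\UCCAlg$ and a corresponding pair $\sff\leftrightarrow\sfg$ under \eqref{eq-UCCAlg-CACoalg}, the task is to verify that $\sff\cdot p\leftrightarrow \sfg\cdot\Bbar p$ by checking the three defining equations \eqref{eq-sfp-gs}, \eqref{eq-sfv-g}, \eqref{eq-f-wg} on the new pair. Using the splitting $1=\opr+\bv\cdot\bfeta$ on $A[1]$ and the explicit description \eqref{eq-f1f0-A1k}, \eqref{eq-f0-g0-A1T} of $\Bbar p$ in components, each equation reduces to a direct computation exploiting unit preservation $\bfeta_A p_1=\bfeta_{A'}$.

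Second I would verify naturality in the coalgebra argument. Given a morphism $\sfq:C'\to C$ in $\CACoalg$ and the same correspondence $\sff\leftrightarrow\sfg$, I would verify $\Cobar\sfq\cdot\sff\leftrightarrow \sfq\cdot\sfg$ by analogous componentwise analysis. The explicit formulas \eqref{eq-g1g0g0} and \eqref{eq-und-Cobar-g-und-f} describing $\Cobar\sfq$, together with the preservation relations $\sfq_1\eps^C=\eps^{C'}$, $\sfw^{C'}\sfq_1=\sfw^C$ and the splitting $1=\opr+\beps\cdot\bw$ on $C[-1]$, make each of the three componentwise checks routine.

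The main obstacle will be the careful book-keeping of the two split idempotents on the two sides together with the suspension $\sigma^{\pm1}$ sign conventions. Since both $\Bbar$ and $\Cobar$ act on morphisms by very simple formulas on first components modulo these splittings, no conceptual difficulty arises beyond what was already handled in establishing the bijection, so the verifications reduce to algebraic identities analogous to those appearing in \eqref{eq-sfsbsp-sfsbsv-bsp-bsv} and \eqref{eq-sxsgs-sx-wsxs-wsx}.
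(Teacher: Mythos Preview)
Your proposal is correct and follows essentially the same approach as the paper: the paper likewise reduces the proof to checking naturality of the already-constructed bijection \eqref{eq-UCCAlg-CACoalg} in each variable, carrying out the componentwise verification via the explicit formulas for $\Bbar$ and $\Cobar$ on morphisms and the splittings $1=\opr+\bv\bfeta$, $1=\opr+\beps\bw$. The only cosmetic difference is that the paper writes out the image of $(\sff_1,\und f)$ under both paths of each naturality square explicitly (including the iterated comultiplication $\bar\Delta_C^{(k)}$ in $\sfg_1$) and then compares, whereas you phrase it as verifying relations \eqref{eq-sfp-gs}--\eqref{eq-f-wg} for the composite pair; these are equivalent since those three relations characterise the bijection.
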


\begin{proof}
We have to prove naturality of bijection~\eqref{eq-UCCAlg-CACoalg} with respect to $A$ and $C$.
The bijection takes
\begin{equation}
(\sff_1,\und f) \mapsto (\check\sff_1\opr_A,\check\sff_1\sfv_A,\sigma\und f) \rMapsTo^{[1]} (\sigma^{-1}\check\sff_1\opr_A\sigma,\sigma^{-1}\check\sff_1\sfv_A\sigma,\und f\sigma) =(\check\sfg_1,\sfg_0|_{\bar{C}}\sigma,\sfw\sfg_0\sigma) \mapsto (\sfg_1,\sfg_0\sigma),
\label{eq-ff-fprfvsf-sfprssfvsfs-ggswgs-ggs}
\end{equation}
where
\begin{gather*}
\sfg_0 =(\sfg_0|_{\bar{C}},\sfw\sfg_0) =(\sigma^{-1}\check\sff_1\sfv_A,\und f): C =\bar C\oplus\kk \to \kk,
\\
\sfg_1 =\bar\Delta_C^{(k)} \cdot \check\sfg_1^{\tens k} =\bar\Delta_C^{(k)} \cdot (\sigma^{-1}\check\sff_1\opr_A\sigma)^{\tens k}: \bar C \to \bar A[1]^{\tens k}, \qquad k>0.
\end{gather*}

Naturality of \eqref{eq-UCCAlg-CACoalg} with respect to $A$ means that for each \(\sfh:A\to B\in\UCCAlg\)
\begin{diagram}[LaTeXeqno]
\UCCAlg(\bar{C}[-1]T^\ge,A) &\rTTo^\sim &\CACoalg(C,\bar A[1]T^\ge)
\\
\dTTo<{\UCCAlg(1,\sfh)} &= &\dTTo>{\CACoalg(C,\sfBar h)}
\\
\UCCAlg(\bar{C}[-1]T^\ge,B) &\rTTo^\sim &\CACoalg(C,\bar B[1]T^\ge)
\label{dia-UCCAlg(1h)-CACoalg(Ch)}
\end{diagram}
The left-bottom path takes \((\sff_1,\und f)\) to
\begin{multline*}
(\sff_1\sfh_1,\und f+\und h) \mapsto (\check\sff_1\sfh_1\opr_B,\check\sff_1\sfh_1\sfv_B,\sigma(\und f+\und h))
\\
\rMapsTo^{[1]} (\sigma^{-1}\check\sff_1\sfh_1\opr_B\sigma,\sigma^{-1}\check\sff_1\sfh_1\sfv_B\sigma,(\und f+\und h)\sigma) \mapsto (\sfq_1,\sfq_0\sigma),
\end{multline*}
where
\begin{equation*}
\sfq_1 =\bar\Delta_C^{(k)} \cdot (\sigma^{-1}\check\sff_1\sfh_1\opr_B\sigma)^{\tens k}: \bar C \to \bar B[1]^{\tens k}, \quad \sfq_0 =(\sigma^{-1}\check\sff_1\sfh_1\sfv_B,\und f+\und h): C =\bar C\oplus\kk \to \kk.
\end{equation*}
The top bijection takes \((\sff_1,\und f)\) to \eqref{eq-ff-fprfvsf-sfprssfvsfs-ggswgs-ggs} and the right morphism takes it to
\(\bigl(\sfg_1\cdot\Bbar_1h,(\sfg_0+(\eps_C\oplus\sfg_1)\Bbar_0h)\sigma\bigr)\).
We have
\begin{align*}
\sfg_1\cdot\Bbar_1h =\bar\Delta_C^{(k)} \cdot \check\sfg_1^{\tens k} \cdot \bar{h}_1^{\tens k} &= \bar\Delta_C^{(k)} (\sigma^{-1}\check\sff_1\opr_A\sigma\bar{h}_1)^{\tens k} =\bar\Delta_C^{(k)} (\sigma^{-1}\check\sff_1(1-\sfv\eta)\sfh_1\sigma\opr_B)^{\tens k}
\\
&=\bar\Delta_C^{(k)} (\sigma^{-1}\check\sff_1\sfh_1\opr_B\sigma)^{\tens k} =\sfq_1: \bar C \to B[1]^{\tens k},
\\
\sfg_0+\sfg_1\Bbar_0h &= \sigma^{-1}\check\sff_1\sfv_A +\sigma^{-1}\check\sff_1\opr_A\sigma h_1\bv_B =\sigma^{-1}\check\sff_1\sfh_1\sfv_B =\sfq_0: \bar C \to \kk
\end{align*}
due to obvious identity
\[ \sfv_A +\opr_A\sfh_1\sfv_B =\sfh_1\sfv_B: A \to \kk.
\]
Furthermore,
\[ \sfw\sfg_0+(\sfw,0)(\eps_C\oplus\sfg_1)\Bbar_0h =\und f+h_0\bv_B =\und f+\und h =\sfw\sfq_0: \kk \to \kk
\]
due to computation
\[ h_0\bv_B =\sfh_0\sfv_B =\und h\eta_B\sfv_B =\und h.
\]
Therefore, equation~\eqref{dia-UCCAlg(1h)-CACoalg(Ch)} is proven.

Naturality of \eqref{eq-UCCAlg-CACoalg} with respect to $C$ means that for each \(\sfj:C\to D\in\CACoalg\)
\begin{diagram}[LaTeXeqno]
\UCCAlg(\bar{D}[-1]T^\ge,A) &\rTTo^\sim &\CACoalg(D,\bar A[1]T^\ge)
\\
\dTTo<{\UCCAlg(\sfCobar j,A)} &= &\dTTo>{\CACoalg(\sfj,1)}
\\
\UCCAlg(\bar{C}[-1]T^\ge,A) &\rTTo^\sim &\CACoalg(C,\bar A[1]T^\ge)
\label{dia-UCCAlg(jA)-CACoalg(j1)}
\end{diagram}
The left-bottom path takes \((\sff_1,\und f)\) to
\begin{multline*}
((\sfCobar_1j)\cdot\sff_1,\und{\Cobar j}+\und f) \mapsto (j_1\check\sff_1\opr_A,j_0+j_1\check\sff_1\sfv_A,\sigma(\und{\Cobar j}+\und f))
\\
\rMapsTo^{[1]} (\sigma^{-1}j_1\check\sff_1\opr_A\sigma,\sigma^{-1}(j_0+j_1\check\sff_1\sfv_A)\sigma,(\und{\Cobar j}+\und f)\sigma) \mapsto (\sfr_1,\sfr_0\sigma),
\end{multline*}
which takes into an account that
\[ ((\sfCobar_1j)\cdot\sff_1)\spcheck =j_0\eta_A +j_1\check\sff_1: \bar{C}[-1] \to A.
\]
The top bijection takes \((\sff_1,\und f)\) to \((\sfg_1,\sfg_0\sigma)\) from \eqref{eq-ff-fprfvsf-sfprssfvsfs-ggswgs-ggs} and the right morphism takes it to \((\sfj_1\sfg_1,(\sfj_0+\sfj_1\sfg_0)\sigma)\).
This coincides with \((\sfr_1,\sfr_0\sigma)\).
In fact,
\begin{align*}
\sfj_1\sfg_1 =\sfj_1 \bar\Delta_D^{(k)} (\sigma^{-1}\check\sff_1\opr_A\sigma)^{\tens k} =\bar\Delta_C^{(k)} (\sigma^{-1}j_1\check\sff_1\opr_A\sigma)^{\tens k} =\sfr_1 &: \bar C \to \bar A[1]^{\tens k},
\\
\sfj_0+\sfj_1\sfg_0 =\sigma^{-1}j_0 +\sfj_1\sigma^{-1}\check\sff_1\sfv_A =\sigma^{-1}(j_0+j_1\check\sff_1\sfv_A) =\sfr_0 &: \bar C \to \kk,
\\
\sfw_C\sfj_0+\sfw_C\sfj_1\sfg_0 =\sfw_C\sfj_0+\sfw_D\sfg_0 =\und{\Cobar j}+\und f =\sfw\sfr_0 &: \kk \to \kk.
\end{align*}
Therefore equation~\eqref{dia-UCCAlg(jA)-CACoalg(j1)} holds and the theorem is proven.
\end{proof}

Notice that the both sides of \eqref{eq-f-wg} do not appear in the equations at all.
One may assume that the components of morphisms of curved algebras and coalgebras belonging to $\kk^1$ are all 0.
Then one gets subcategories $\uccAlg\subset\UCCAlg$ and $\caCoalg\subset\CACoalg$ with smaller sets of morphisms.

\begin{definition}
Objects of the category $\uccAlg$ are unit-complemented curved algebras and morphisms are graded algebra homomorphisms \(\sff:A\to B\) such that
\begin{equation*}
\sff m^B_1 =m^A_1\sff, \qquad m^B_0 =m^A_0\sff.
\end{equation*}
The composition and the identity morphisms are inherited from $\gr\alg$.
\end{definition}

\begin{definition}
Objects of the category $\caCoalg$ are curved augmented coalgebras and morphisms \(\sfg:C\to D\) are pairs \((\sfg_1,\sfg_0')\) consisting of a homomorphism of augmented graded coalgebras \(\sfg_1:C\to D\) and a $\kk$\n-linear map \(\sfg_0':\bar C\to\kk\) of degree~1 such that
\begin{align*}
\delta^C_1\sfg_1 +\opr_C\bar\delta^C_2(\sfg_0'\tens\sfg_1 -\sfg_1\tens\sfg_0') &=\sfg_1\delta^D_1: C \to \bar D,
\\
\delta^C_0 -\delta^C_1\sfg_0' -\opr_C\bar\delta^C_2(\sfg_0'\tens\sfg_0') &=\sfg_1\delta^D_0: C \to \kk.
\end{align*}
The composition $\sfh:C\to E$ of morphisms \(\sff:C\to D\) and \(\sfg:D\to E\) is given by \(\sfh_1=\sff_1\sfg_1\), \(\sfh_0'=\sff_0'+\sff_1\sfg_0'\).
The unit morphism is \((\id,0)\).
\end{definition}

Notice that there is a functor \(\Bbar:\uccAlg\to\caCoalg\), making the diagram of functors
\begin{diagram}
\uccAlg &\rTTo^\Bbar &\caCoalg
\\
\dTTo &= &\dTTo
\\
\UCCAlg &\rTTo^\Bbar &\CACoalg
\end{diagram}
commute on the nose.
In view of \eqref{eq-f1f0-A1k} $\Bbar$ takes a morphism \(f:A\to B\in\uccAlg\) to the strict coalgebra morphism
\[ \sfBar_1f =\bar f_1T^\ge: \bar{A}[1]T^\ge \to \bar{B}[1]T^\ge,
\]
and the degree~1 functional is
\[ \sfBar_0'f =\bigl( \bar{A}[1]T^> \rTTo^{\pr_1} \bar{A}[1] \rMono A[1] \rTTo^f B[1] \rTTo^\bv \kk \bigr).
\]
The restriction of \eqref{eq-f0-g0-A1T} to $\kk$ vanishes.

Also there is a functor \(\Cobar:\caCoalg\to\uccAlg\), making commutative the diagram of functors
\begin{diagram}
\caCoalg &\rTTo^\Cobar &\uccAlg
\\
\dTTo &= &\dTTo
\\
\CACoalg &\rTTo^\Cobar &\UCCAlg
\end{diagram}
It takes a morphism \(\sfg=(\sfg_1,\sfg_0'):C\to D\in\caCoalg\) to the algebra homomorphism
\[ \sfCobar g: \bar{C}[-1]T^\ge \to \bar{D}[-1]T^\ge,
\]
specified by its components
\[ \bar g_1 =g_1|: \bar{C}[-1] \to \bar{D}[-1], \qquad g_0': \bar{C}[-1] \to \kk,
\]
which coincides with \eqref{eq-g1g0g0}.
If $\und g=0$, then $\und{\Cobar g}$ given by \eqref{eq-und-Cobar-g-und-f} vanishes as well.
Since equations \eqref{eq-fm-mf-m-mf}, \eqref{eq-dg-dg11gg-gd-d-dg} distinguishing morphisms in diagram~\eqref{dia-f-g-fsf-ggs} do not involve $\und f$, $\und g$, we have

\begin{corollary}[to \thmref{thm-Cobar-Bar-adjoint-to-each-other}]
The functors
\[ \Cobar: \caCoalg \leftrightarrows \uccAlg: \Bbar 
\]
are adjoint to each other.
\end{corollary}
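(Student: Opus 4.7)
The plan is to deduce the corollary from \thmref{thm-Cobar-Bar-adjoint-to-each-other} by restricting the adjunction bijection \eqref{eq-UCCAlg-CACoalg} to the relevant subsets of hom-sets. The crucial input, highlighted in the paragraph preceding the corollary, is that the equations \eqref{eq-fm-mf-m-mf} and \eqref{eq-dg-dg11gg-gd-d-dg} which pick out morphisms inside $\UCCAlg$ and $\CACoalg$ do not involve $\und f$ or $\und g$, and that the correspondence $\sff \leftrightarrow \sfg$ intertwines these scalar components precisely via \eqref{eq-f-wg}: $\und f = \sfw\sfg_0$.

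First I would verify that $\Bbar$ and $\Cobar$ restrict to functors between $\uccAlg$ and $\caCoalg$, so that the two commutative squares of functors displayed just before the corollary make sense. For $\Cobar$, equation \eqref{eq-und-Cobar-g-und-f} gives $\und{\Cobar g} = \und g$, so $\und g = 0$ forces $\und{\Cobar g} = 0$. For $\Bbar$, the formulas \eqref{eq-f1f0-A1k} and \eqref{eq-f0-g0-A1T} show that the strict coalgebra morphism $\bar f$ depends only on $f_1$, and that the composite $\sfw \cdot \sfBar_0 f$ equals $\und f$, hence vanishes on morphisms of $\uccAlg$. This is precisely the content already recorded in the two functor diagrams above the corollary.

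Second, I would restrict the bijection \eqref{eq-UCCAlg-CACoalg} to the subsets $\uccAlg(\bar{C}[-1]T^\ge,A) \subset \UCCAlg(\bar{C}[-1]T^\ge,A)$ and $\caCoalg(C,\bar A[1]T^\ge) \subset \CACoalg(C,\bar A[1]T^\ge)$. By \eqref{eq-f-wg}, the conditions $\und f = 0$ and $\und g = 0$ correspond to each other under the bijection, so it restricts to a bijection
\[
\uccAlg(\bar{C}[-1]T^\ge,A) \cong \caCoalg(C,\bar A[1]T^\ge).
\]
Naturality in $A \in \uccAlg$ and $C \in \caCoalg$ is then inherited from the naturality squares \eqref{dia-UCCAlg(1h)-CACoalg(Ch)} and \eqref{dia-UCCAlg(jA)-CACoalg(j1)} of \thmref{thm-Cobar-Bar-adjoint-to-each-other}, by restricting along the inclusions $\uccAlg \hookrightarrow \UCCAlg$ and $\caCoalg \hookrightarrow \CACoalg$ and using the two commutative squares of functors from the previous step. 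The only mild obstacle is the bookkeeping needed to confirm that setting the $\kk^1$\n-components to zero really does cut out $\uccAlg$ and $\caCoalg$ compatibly under the bijection; once the observation about \eqref{eq-f-wg} is in place, this is automatic.
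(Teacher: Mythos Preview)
Your proposal is correct and follows exactly the approach the paper takes: the paper's entire argument is the single sentence preceding the corollary, observing that equations \eqref{eq-fm-mf-m-mf} and \eqref{eq-dg-dg11gg-gd-d-dg} do not involve $\und f$, $\und g$, together with the already-established commutative squares showing $\Bbar$ and $\Cobar$ restrict. You have simply unpacked this into its constituent steps---checking via \eqref{eq-f-wg} that the conditions $\und f=0$ and $\und g=0$ match under the bijection, and noting that naturality is inherited---which is exactly what is implicit in the paper.
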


Now let us describe full subcategories of the above categories.

\begin{definition}
A \emph{unit-complemented $\dg$-algebra} is a unit-complemented curved algebra \((A,m_2,m_1,0,\eta,\sfv)\) with $m_0=0$.
Equivalently, it is a $\dg$\n-algebra \((A,m_2,m_1,\eta)\) with a degree 0 map \(\sfv:A\to\kk\) (splitting of the unit) such that \(\eta\cdot\sfv=1_\kk\).
Morphisms of such algebras are morphisms of $\dg$\n-algebras.
Their full subcategory is denoted \(\ucdgAlg\subset\uccAlg\).
\end{definition}

\begin{definition}
\emph{Augmented curved coalgebras} are defined as curved augmented coalgebras \((C,\delta_2,\delta_1,\delta_0,\eps,\sfw)\) with
\begin{equation}
\sfw\delta_1 =0, \qquad \sfw\delta_0 =0.
\label{eq-wd10-wd00}
\end{equation}
The full subcategory of such coalgebras is denoted $\acCoalg\subset\caCoalg$.
\end{definition}

Positselski \cite{0905.2621} formulates equations~\eqref{eq-wd10-wd00} as \((\sfw,0):\kk\to C\) being a morphism in $\caCoalg$.
Clearly, \(\Cobar(\Ob\acCoalg)\subset\Ob\ucdgAlg\).

\begin{proposition}
The functor $\Bbar$ restricts to a functor \(\Bbar:\ucdgAlg\to\acCoalg\), which has a left adjoint.
The adjunction is \(\Cobar:\acCoalg\leftrightarrows\ucdgAlg:\Bbar\).
\end{proposition}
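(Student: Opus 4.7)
The plan is to verify the two object-level containments $\Bbar(\Ob\ucdgAlg)\subset\Ob\acCoalg$ and $\Cobar(\Ob\acCoalg)\subset\Ob\ucdgAlg$, and then invoke the preceding corollary via a formal argument using fullness of the subcategory inclusions.

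First, suppose $A\in\ucdgAlg$, so $m_0^A=0$, which is equivalent to $b_0^A=0$. The two conditions defining $\acCoalg$ applied to $\Bbar A$ are $\sfw^{\Bbar A}\delta_1^{\Bbar A}=0$ and $\sfw^{\Bbar A}\delta_0^{\Bbar A}=0$. Since $\sfw^{\Bbar A}=\inj_0$ and $\delta_1^{\Bbar A}=\bar b$ is a coderivation of the cut-coalgebra $\bar A[1]T^\ge$, the composite $\inj_0\cdot\bar b$ equals the arity-zero component $\bar b_0=b_0\opr$, which vanishes. Similarly $\inj_0\cdot\delta_0^{\Bbar A}=\inj_0\cdot(-\check b\,\bv)=-b_0\bv=0$. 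Since $\acCoalg$ is a full subcategory of $\caCoalg$, no additional verification is needed on morphisms.

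Next, for $C\in\acCoalg$ we must check $m_0^{\Cobar C}=-\bw\tens\bw-\sum_{n\ge0}\bw\xi_n=0$. By hypothesis, $\sfw\delta_1=0$ and $\sfw\delta_0=0$, which translate to $\bw\xi_1=0$ and $\bw\xi_0=0$. Because $C$ is a curved coalgebra (not a general $A_\infty$-coalgebra), $\xi_n=0$ for $n>2$, so the only surviving term in the sum is $\bw\xi_2$. The strict-counit-complemented axiom $\bw\xi_2=-\bw\tens\bw$ then cancels against the initial $-\bw\tens\bw$, giving $m_0^{\Cobar C}=0$. Thus $\Cobar C\in\ucdgAlg$, and again fullness of $\ucdgAlg\subset\uccAlg$ handles morphisms.

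Finally, the adjunction is inherited formally. For any $C\in\acCoalg$ and $A\in\ucdgAlg$, fullness gives $\ucdgAlg(\Cobar C,A)=\uccAlg(\Cobar C,A)$ and $\acCoalg(C,\Bbar A)=\caCoalg(C,\Bbar A)$, and the corollary to \thmref{thm-Cobar-Bar-adjoint-to-each-other} supplies a natural bijection between the larger hom-sets, which therefore restricts to a natural bijection between the smaller ones. There is no genuine obstacle: the entire argument hinges on the two short arity-zero computations above, with the arguably trickiest point being to notice that $\bw\xi_2=-\bw\tens\bw$ (an axiom, not a hypothesis) is exactly what is needed to cancel the extra term in the definition of $m_0^{\Cobar C}$.
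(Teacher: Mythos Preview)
Your proof is correct and follows the same approach as the paper. You are in fact more thorough: the paper records $\Cobar(\Ob\acCoalg)\subset\Ob\ucdgAlg$ as ``clear'' just before the proposition and does not write out the cancellation $-\bw\tens\bw-\bw\xi_2=0$ that you supply, and the paper's proof itself only details the verification of $\sfw^{\Bbar A}\delta_1^{\Bbar A}=0$ and $\sfw^{\Bbar A}\delta_0^{\Bbar A}=0$, leaving the passage to the adjunction via fullness implicit.
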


\begin{proof}
We have to prove that \(\Bbar(\Ob\ucdgAlg)\subset\Ob\acCoalg\).
This follows from two remarks.
First,
\[ \sfw^{\Bbar A}\delta^{\Bbar A}_1 =\bigl( \kk \rMono^{\inj_0} \bar{A}[1]T^\ge \rTTo^{\bar{b}} \bar{A}[1]T^\ge \bigr) =0
\]
since
\[ \bar{b} =\sum_{a+k+c=n}^{k>0} \bigl(1^{\tens a}\tens\bar b_k\tens1^{\tens c}: \bar A[1]^{\tens n} \to \bar A[1]^{\tens(a+1+c)} \bigr).
\]
Second,
\[ \sfw^{\Bbar A}\delta^{\Bbar A}_0 =-\bigl( \kk \rMono^{\inj_0} A[1]T^\ge \rTTo^{\check{b}} A[1] \rTTo^\bv \kk \bigr) =0
\]
since $b_0=0$.
\end{proof}

\subsection{Twisting cochains.}
Let us consider a unit-complemented curved algebra $A$ and a curved augmented coalgebra $C$.
A morphism \(f\in\uccAlg(\bar{C}[-1]T^\ge,A)\) is identified with a degree~0 map \(\check\sff_1:\bar{C}[-1]\to A\) which satisfies equations~\eqref{eq-fm-xe-xf-xffm}.
Equivalently, the degree~1 map \(\theta:C\to A\) satisfies the equations
\begin{subequations}
\begin{align}
\sfw\theta &= 0: \kk \to A,
\label{eq-theta-a}
\\
\theta m_1^A +\delta_1\theta &= \delta_0\eta^A +\eps^Cm^A_0 -\delta_2(\theta\tens\theta)m^A_2: C \to A.
\label{eq-theta-b}
\end{align}
\label{eq-theta}
\end{subequations}
In fact, each solution of \eqref{eq-theta-a} has the form
\[ \theta =\bigl\langle C \rTTo^{\opr_C} \bar C \rTTo^{\sigma^{-1}} \bar C[-1] \rTTo^{\check\sff_1} A \bigr\rangle
\]
for a unique $\check\sff_1$.
Restricting \eqref{eq-theta-b} to $\bar C$ gives the top equation from \eqref{eq-fm-xe-xf-xffm}, while restricting to the image of $\sfw:\kk\to C$ gives the bottom equation from \eqref{eq-fm-xe-xf-xffm}.

\begin{definition}
The degree~1 map \(\theta:C\to A\) that satisfies \eqref{eq-theta} is called a \emph{twisting cochain}.
\end{definition}

The set \(\Tw(C,A)=\{\textup{twisting cochains }\theta:C\to A\}\) is in bijection with the homomorphism sets
\begin{diagram}
\uccAlg(\bar{C}[-1]T^\ge,A) &\rTTo^\sim &\Tw(C,A) &\rTTo^\sim &\caCoalg(C,\bar A[1]T^\ge),
\\
\check\sff_1 &\rMapsTo &\opr_C\sigma^{-1}\check\sff_1 =\theta &\rMapsTo &(\theta\big|_{\bar{C}}\sigma,\theta\big|_{\bar{C}}\sfv) =(\check\sfg_1,\check\sfg_0').
\end{diagram}

When $A$ is a unit-complemented $\dg$\n-algebra and $C$ is an augmented curved coalgebra the notion of a twisting cochain simplifies to a degree~1 map \(\theta:\bar C\to A\) which satisfies the equation
\[ \theta m_1^A +\delta_1\theta = \delta_0\eta^A -\delta_2(\opr_C\tens\opr_C)(\theta\tens\theta)m^A_2: \bar C \to A.
\]

\subsection{Conclusion.}
The results of the article indicate that a dual notion to differential graded algebra is the augmented curved coalgebra, and not a differential graded coalgebra as one might think a priori.

\ifx\chooseClass1
{\footnotesize

\vsk

Institute of Mathematics NASU,
3 Tereshchenkivska st.,
Kyiv-4, 01601 MSP,
Ukraine\\
lub@imath.kiev.ua

\vs
}
\received{?} 

	\else


\begin{thebibliography}{FOOO09}

\bibitem[BLM08]{BesLyuMan-book}
Yuri Bespalov, V.~V. Lyubashenko, and Oleksandr Manzyuk, \emph{Pretriangulated
  ${A}_\infty$-categories}, Proc. Inst. Math. NASU. 
  Mathematics and its Applications, vol.~76, 
  Institute of Mathematics of NAS of Ukraine, Kyiv, 2008,
  \url{http://www.math.ksu.edu/~lub/papers.html}.

\bibitem[FOOO09]{FukayaOhOhtaOno:Anomaly}
Kenji Fukaya, Yong-Geun Oh, Hiroshi Ohta, and Kaoru Ono, \emph{Lagrangian
  intersection {F}loer theory: Anomaly and obstruction}, - AMS/IP Studies in
  Advanced Mathematics Series, vol.~46, American Mathematical Society, 2009.

\bibitem[Lyu12]{Lyu-A8-several-entries}
V.~V. Lyubashenko, \emph{${A}_\infty$-morphisms with several entries},
  \href{http://arXiv.org/abs/1205.6072}{{\tt arXiv:\linebreak[1]1205.6072}},
  may 2012.

\bibitem[Pol]{Polishchuk:curved-dg}
Alexander Polishchuk, \emph{Introduction to curved dg-algebras},
  \url{http://www.math.polytechnique.fr/SEDIGA/documents/polishchuk_ens100324.pdf}.

\bibitem[Pos11]{0905.2621}
Leonid Positselski, \emph{Two kinds of derived categories, {K}oszul duality,
  and comodule-contramodule correspondence}, Memoirs Amer. Math. Soc.
  \textbf{212} (2011), no.~996, vi+133,
  \href{http://arXiv.org/abs/0905.2621}{{\tt arXiv:\linebreak[1]0905.2621}}.

\bibitem[Pos12]{1202.2697}
Leonid Positselski, \emph{Weakly curved ${A}_\infty$-algebras over a
  topological local ring}, 2012,
  \href{http://arXiv.org/abs/1202.2697}{{\tt arXiv:\linebreak[1]1202.2697}}.

\end{thebibliography}
\tableofcontents
\fi
\end{document}